\def\epsilon{\varepsilon}
\newcommand{\ssm}{\smallsetminus}
\renewcommand{\phi}{\varphi}
\newcommand{\Curr}{\mathbb{P}\text{Curr}}
\newcommand{\curr}{\text{Curr}}
\newcommand{\Out}{\text{Out}}
\newcommand{\inv}{^{-1}}
\newcommand{\dom}{\text{dom}}
\newcommand{\Teich}{\text{Teich}}
\newcommand{\MCG}{\text{MCG}}
\newcommand{\FN}{\mathbb{F}_N} 
\newcommand{\cvn}{\text{cv}_N}
\newcommand{\CVN}{\text{CV}_N}
\newcommand{\barCVN}{\bar{\text{CV}}_N} 
\newcommand{\barcvn}{\bar{\text{cv}}_N}
\newcommand{\CQ}{{\mathcal Q}}
\newcommand{\Tobs}{\widehat T^{\scriptstyle\text{obs}}}
\newcommand{\GE}{\text{GE}}
\newcommand{\R}{\mathbb R} 
\newcommand{\Z}{\mathbb Z}
\newcommand{\N}{\mathbb N}
\def\bar{\overline} 
\def\tilde{\widetilde} 
\def\hat{\widehat}
\newtheorem{thm}{Theorem}[section]
\newtheorem{lem}[thm]{Lemma}
\newtheorem{prop}[thm]{Proposition}
\newtheorem*{thm*}{Theorem}
\newtheorem*{prop*}{Proposition}
\newtheorem*{thm-main*}{Theorem~\ref{thm:main}}
\newtheorem*{defn*}{Definition} 
\newtheorem*{rem*}{Remark}
\numberwithin{equation}{section} 
\begin{document}

\title{Ergodic currents dual to a real tree}

\author{Thierry Coulbois, Arnaud Hilion} \thanks{The authors are
  supported by the grant ANR-10-JCJC 01010 of the Agence nationale de
  la recherche.}

\date{\today }









\begin{abstract} 
Let $T$ be an $\R$-tree with dense orbits in the boundary of Outer
  space. When the free group $\FN$ acts freely on $T$, we prove that
  the number of projective classes of ergodic currents dual to $T$ is
  bounded above by $3N-5$.

We combine Rips induction and splitting induction to define unfolding
induction for such an $\R$-tree $T$. Given a current $\mu$ dual to
$T$, the unfolding induction produces a sequence of approximations
converging towards $\mu$.

We also give a unique ergodicity criterion.
\end{abstract}


\maketitle


\section{Introduction}

\subsection{Main results}

Let $\FN$ be the free group with $N$ generators. M.~Culler and
K.~Vogtmann~\cite{cv-moduli} introduced \textbf{Outer space}: the
space $\CVN$ of projective classes of free minimal actions of $\FN$ by
isometries on simplicial metric trees. We denote by $\cvn$ the space
of free minimal actions of $\FN$ by isometries on simplicial metric
trees: $\cvn$ is the unprojectivised Outer space.
The space $\CVN$ is a contractible simplicial complex,
with missing faces. The maximal dimension of a simplex in $\CVN$ is
$3N-4$.  Outer space admits a Thurston boundary, denoted by
$\partial\CVN$, which gives rise to a compactification
$\barCVN=\CVN\uplus\partial\CVN$ of $\CVN$.  This compactification
consists of projective classes of $\R$-trees with a minimal, very
small action of $\FN$ by isometries
\cite{cl-verysmall,bf-stable}.  Again, we denote by
$\partial\cvn$ and $\barcvn$ the corresponding unprojectivized spaces.

The group $\Out(\FN)$ of outer automorphisms of the free group $\FN$
acts on $\CVN$.  One basically considers that the Outer space
plays the same role for $\Out(\FN)$ as the Teichm\"uller space of a
surface $S$ for the mapping class group $\MCG(S)$ of $S$ -- see for
instance \cite{bv-out-teich} where the analogy is carried on.
 
Another space on which $\Out(\FN)$ naturally acts, and which appears
to capture slightly different informations from $\Out(\FN)$, is the
space $\Curr(\FN)$ of projective classes of
currents~\cite{kapo-currents,kapo-frequency}.  Let us recall what a
current is.

The space $\partial\FN$ of ends of the free group $\FN$ is a Cantor
set, equipped with an action by homeomorphisms of $\FN$. The space
$\partial\FN$ is also the Gromov boundary of $\FN$. The action of
$\FN$ on itself by left multiplication extends continuously to
$\partial\FN$.  We denote by $\partial^2\FN=(\partial\FN)^2\ssm\Delta$
the double boundary of $\FN$, where $\Delta$ stands for the
diagonal. The double boundary inherits a product topology from
$\partial\FN$, and the action of $\FN$ on $\partial\FN$ gives rise to
a diagonal action on $\partial^2\FN$.  The involution
$(X,Y)\mapsto(Y,X)$ of $\partial^2\FN$ is called the flip map.  A
\textbf{current} $\mu$ is a $\FN$-invariant, flip-invariant, Radon
measure (that is to say a Borel measure which is finite on compact
sets) on $\partial^2\FN$.  We notice that a linear combination, with
non-negative coefficients, of currents is still a current.

The space $\curr(\FN)$ of currents of $\FN$ is equipped with the
weak-$*$-topology: it is a locally compact space.  The group
$\Out(\FN)$ naturally acts on $\curr(\FN)$ -- see for instance
\cite{kapo-currents} for a detailed description of this action. The
space $\Curr(\FN)$ of projective classes of (non-zero) currents
equipped with the quotient topology is a compact set, and the action
of $\Out(\FN)$ on $\curr(\FN)$ induces an action on $\Curr(\FN)$.

The spaces $\curr(\FN)$ and $\cvn$ can be viewed, to some extent, 
as dual spaces. Indeed, M.~Lustig and I.~Kapovich~\cite{kl-geom-intersection}
defined a kind of duality bracket:
\[
\begin{array}{cccc}
\langle\cdot,\cdot\rangle: & \barcvn \times \curr(\FN) & \rightarrow & \R_{\geq 0}\\
& (T,\mu) & \mapsto & \langle T,\mu \rangle.
\end{array}
\]
This bracket is characterized as being the unique continuous $\Out(\FN)$-equivariant
map $\barcvn \times \curr(\FN) \rightarrow \R_{\geq 0}$
which is $\R_{\geq0}$-homogeneous in the first entry and 
$\R_{\geq0}$-linear in the second one, 
and which satisfies that for all $T\in\barcvn$ and for all rational
current $\mu_w$ induced by a conjugacy class of a primitive element $w$ of $\FN$,
$\langle T,\mu_w \rangle$ is the translation length of the conjugacy class of
$w$ in $T$ -- see \cite{kl-geom-intersection}.
The number $\langle T,\mu \rangle$ is the \textbf{intersection number} of
$T$ and $\mu$.

A tree $T\in\barcvn$ and a current $\mu\in\curr(\FN)$ are {\bf dual}
when $\langle T,\mu \rangle =0$.  We notice that the nullity of the
intersection number $\langle T,\mu \rangle =0$ only depends on the
projective classes of $T$ and $\mu$.  The main goal of this paper is
to explain how to build all the currents dual to a given tree.  We
remark that the set of projective classes of currents dual to a given
tree $T$ is convex: the extremal points of this set are {\bf ergodic}
currents.

\begin{thm}\label{thm:main}
Let $T$ be an $\R$-tree with a free, minimal action of $\FN$ by
isometries with dense orbits. 
There are at most $3N-5$ projective classes of ergodic currents
dual to $T$. 
\end{thm}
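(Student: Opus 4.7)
The plan is to parametrize the cone of currents dual to $T$ by an inverse limit of finite-dimensional simplices arising from the unfolding induction, and then bound the number of extremal rays by the combinatorial complexity of the structures produced. This is the $\R$-tree analogue of the classical Katok and Veech bounds on the number of ergodic transverse measures for a minimal measured foliation on a surface.

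First, I would use the unfolding induction to associate to $T$ a sequence of finite combinatorial structures $\Gamma_n$ (roughly, train tracks or systems of partial isometries / band complexes carrying $T$), together with refinement maps encoded by non-negative integer transition matrices $M_n\colon \Gamma_{n+1}\to\Gamma_n$. To each current $\mu$ dual to $T$, I would attach a non-negative weight vector $w_n(\mu)$ on the edges (resp. bands) of $\Gamma_n$, defined by the $\mu$-measures of the cylinder sets associated to edges; the compatibility $M_n\, w_{n+1}(\mu)=w_n(\mu)$ then holds by construction. The approximation property of the unfolding induction promised in the abstract guarantees that $\mu$ is entirely recovered from the sequence $\bigl(w_n(\mu)\bigr)_n$; projectively, this embeds the space of dual currents into the inverse limit of the probability simplices on the $\Gamma_n$.

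Next, a standard Perron--Frobenius type argument applied to the nested cones shows that the decreasing intersection $\bigcap_n M_1\cdots M_{n-1}\,\Delta(\Gamma_n)$ is a compact convex simplex whose extremal points are exactly the projective classes of ergodic currents dual to $T$. Its dimension is bounded above by $\dim \Delta(\Gamma_n)$ for any sufficiently large $n$, i.e. by the number of edges of $\Gamma_n$ minus one; so an upper bound on the number of ergodic projective currents is read off directly from the maximum complexity of the structures $\Gamma_n$.

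The main obstacle --- and the place where the hypotheses of freeness and dense orbits intervene --- is the combinatorial bound: one has to show that the structures $\Gamma_n$ produced by the unfolding induction always have at most $3N-6$ edges. This requires careful bookkeeping of the graph types surviving the Rips moves and splittings making up the unfolding, together with an argument that freeness and dense orbits rule out the extremal configurations (Seifert pieces or arc components) that would otherwise saturate the general upper bound of $3N-3$ edges for a reduced graph of rank $N$. Freeness in particular imposes additional valence and orientation constraints on the branch vertices of $T$, which should bring the edge count down to $3N-6$ and thus give at most $3N-6$ projective ergodic dual currents, as stated.
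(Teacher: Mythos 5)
Your skeleton (unfolding induction, edge-cylinder weight vectors, compatibility $\mu_n=M_n\mu_{n+1}$, recovery of $\mu$ from the sequence of vectors, and a bound read off from an inverse limit of cones) matches the paper. But the step where you actually produce the number $3N-6$ is wrong. You propose to show that the graphs $\Gamma_n$ themselves have at most $3N-6$ edges, with freeness "ruling out the extremal configurations". This cannot work: the $\Gamma_n$ are homotopy equivalent to the rose with $N$ petals and have no valence-one vertices, so the sharp bound on their number of generalized edges is $3N-3$ (attained by trivalent graphs, e.g.\ for train tracks of iwip automorphisms, whose attracting trees are free with dense orbits). The paper gets from $3N-3$ down to $3N-6$ by three separate savings, none of which is a bound on the edge count of a single graph: (i) projectivization costs one ($d$-dimensional cone, projective dimension $d-1$); (ii) one shows that $D=\liminf d_n<3N-3$, i.e.\ the induction eventually drops strictly below the maximum; and (iii) --- the step entirely absent from your proposal --- the transition matrices are products of elementary matrices with integer pseudo-inverses, so the relevant square products are unimodular, and since $\|M_1\cdots M_n\|\to\infty$ the projective volume of the nested images of the standard simplex tends to $0$, which lowers the projective dimension by one more (Proposition~\ref{prop:matrices}(2), the analogue of the Veech volume argument for Rauzy--Veech). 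Your "dimension $\leq$ number of edges minus one" gives at best $3N-4$, or $3N-5$ with observation (ii); without (iii) you do not reach $3N-6$.

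A second gap: you assume the approximation property ("$\mu$ is entirely recovered from $(w_n(\mu))_n$") holds unconditionally. It holds only when every nested intersection of vertex-trees is a singleton (Theorem~\ref{thm:main-under-complete}); for decomposable trees in the sense of Guirardel no unfolding sequence achieves this. The paper handles that case separately (Section~\ref{sec:induction-on-n}): the inverse limit $\hat\Gamma$ has a finite core $\Gamma_\infty$ whose components carry actions of free groups of strictly smaller rank $N_i$, one proves injectivity of $\mu\mapsto(\mu^1,\dots,\mu^r,(\mu_n)_n)$ (Lemma~\ref{lem:injective}), and one concludes by induction on $N$ via the Euler characteristic count $(3N_1-5)+\cdots+(3N_r-5)+|\GE(\Gamma_n\ssm\Gamma_\infty)|-1\leq 3N-4-4r$. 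Relatedly, the splitting moves only preserve the \emph{regular} sublamination, so you also need the fact that currents dual to a free action have no atoms (Proposition~\ref{prop:periodic-leaves}) to justify working with regular cylinders and to show the vertex-cylinder measures tend to $0$. Without these ingredients the proposal does not close.
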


The duality between trees and currents can also be unterstood by
considering laminations.  An $\R$-tree $T$ with an action of $\FN$ by
isometries with dense orbits has a dual lamination
$L(T)$ \cite{chl1-II}. I.~Kapovich and
M.~Lustig~\cite{kl-intersection} proved that a current is dual to $T$
if and only if it is carried by the dual lamination. Thus we can
rephrase our main Theorem to:

\begin{thm}
Let $T$ be an $\R$-tree with a free, minimal action of $\FN$ by
isometries with dense orbits. The dual lamination $L(T)$ carries at
most $3N-5$ projective classes of ergodic currents. 
\end{thm}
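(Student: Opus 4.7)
The plan is to translate the problem into dynamics on a system of partial isometries associated with $T$ and then bound the dimension of its simplex of invariant measures. The first step is the reduction already built into the statement: by the Kapovich--Lustig theorem quoted just before, a current is dual to $T$ if and only if it is carried by $L(T)$, so the cone of dual currents coincides with the cone of $L(T)$-carried currents and the ergodic dual currents are precisely its extremal rays. I would then encode $L(T)$ by a finite $\FN$-equivariant system of partial isometries $\CK$ on a compact subtree $K$ of $T$, following the band-complex/Rips-machine framework of Coulbois--Hilion--Lustig. Under this encoding, currents carried by $L(T)$ correspond bijectively to transverse $\CK$-invariant measures.

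The second step is the unfolding induction announced in the abstract. I would alternate a Rips move (chopping off orbits of partial isometries that spill outside the base tree) with a splitting move (refining partial isometries along their short prefixes), producing a nested sequence of systems $\CK \supset \CK_1 \supset \CK_2 \supset \cdots$ whose supports shrink. For a dual current $\mu$ one checks that each $\CK_n$ still supports a renormalised copy of $\mu$, and that the resulting cylinder approximations $\mu_n$ converge to $\mu$ in $\curr(\FN)$. Every ergodic current then corresponds to an invariant itinerary of the induction, and the number of ergodic classes is bounded by the dimension of the simplex of invariant measures of the induction system.

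To conclude, I would bound this dimension by $3N-6$ through a combinatorial count on the band complex: the underlying graph of groups has at most $3N-3$ edges (this is the maximal number of edges of a quotient of a very small tree of rank $N$), and two further dimensions are absorbed, one by the Perron--Frobenius normalisation at the vertices of the induction and one by the overall projective scaling, leaving at most $3N-6$ independent ergodic transverse measures.

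The hard part will be the second step: proving that unfolding induction genuinely converges for \emph{every} dual current, not only for the generic or minimal ones. A current carried by $L(T)$ need not be ergodic or even minimal as a measure on $\partial^2 \FN$, so one must show that the induction is compatible with the ergodic decomposition, that no mass is lost along exceptional orbits, and that the successive splittings are admissible with respect to the measure class of $\mu$. Once such a convergence theorem is in place, the $3N-6$ estimate becomes an Euler-characteristic argument analogous to the Katok--Veech bounds on ergodic invariant measures of interval exchange transformations.
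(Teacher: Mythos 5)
Your overall strategy coincides with the paper's: encode $L(T)$ by the system of isometries on the compact heart, run an unfolding (Rips plus splitting) induction, show that the vectors of cylinder measures $\mu_n=(\mu(\hat e))_{\hat e\in\GE(\Gamma_n)}$ determine $\mu$ and satisfy $\mu_n=M_n\mu_{n+1}$, and bound the dimension of the resulting positive cone. The convergence worry you flag is resolved exactly as you suspect it should be: freeness of the action forces currents to have no atoms, the splitting induction only discards isolated leaves (the regular lamination is the derived set of $L(T)$), and the nested cylinders over a vertex sequence have finite intersection, so the error term in the Kakutani--Rokhlin-type approximation tends to $0$.

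There are, however, two genuine gaps. First, your dimension count does not actually reach $3N-6$. From at most $3N-3$ generalized edges you get cone dimension $\le 3N-3$, hence projective dimension $\le 3N-4$; your ``Perron--Frobenius normalisation at the vertices'' is not a mechanism that exists in this setting (there is no vertex switch condition here), and even granting it you would only reach $3N-5$. The paper extracts the two missing dimensions differently: (a) a graph realizing the maximum $3N-3$ of generalized edges cannot persist forever under Rips induction, so $D=\liminf d_n\le 3N-4$; and (b) the incidence matrices factor into elementary unimodular matrices, so the images of the standard simplex have constant volume while $\lVert M_1\cdots M_n\rVert\to\infty$, forcing the projective volume to degenerate and cutting one more dimension, giving projective dimension $\le D-2\le 3N-6$. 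Without an argument of type (a) and (b), your bound stalls at $3N-4$. Second, you assume the induction ``genuinely converges'' for the whole lamination, but for decomposable trees it does not: the inverse limit graph $\hat\Gamma$ can contain a nonempty core $\Gamma_\infty$ on which the induction stabilizes. The paper handles this by splitting the current into pieces carried by the subtrees $T_i$ dual to the components of $\Gamma_\infty$ (of ranks $N_i<N$) plus a finite-dimensional cone for the complementary edges, and then inducting on the rank $N$; the Euler characteristic bookkeeping $\lvert\GE(\Gamma_n\ssm\Gamma_\infty)\rvert\le 3(N-N_1-\cdots-N_r)-3+r$ is what makes the constants close up. Your proposal has no mechanism for this case.
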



\subsection{The surface case} 

Generally speaking, the present work is inspired by the situation of
hyperbolic surfaces which we recall here.

Let $S=S_g$ be an oriented surface of genus $g$ with negative Euler
characterisic $\chi(S)=2-2g<0$. The mapping class group of $S$ acts on
Teichm\"uller space $\Teich(S)$ which is homeomorphic to a ball of
dimension $6g-6$. Teichmüller space can be compactified by adding its
Thurston boundary $\partial\Teich(S)$ which is homeomorphic to a
sphere of dimension $6g-7$. There are several useful models to
describe $\partial\Teich(S)$, see for instance
\cite{paulin-top-equiv}. In particular, points in $\partial\Teich(S)$
can be seen alternatively as projective classes of:
\begin{itemize}
\item $\R$-trees with a minimal small action by isometries of
  the fundamental group of the surface,
\item measured geodesic laminations on $S$,
\item measured singular foliations on $S$ (up to Whitehead equivalence).
\item currents (see~\cite{bona-teich-curr})
\end{itemize}

A geodesic lamination may carry more than one transverse measure (even
if the lamination is minimal). The simplex of measures carried by a
given geodesic lamination embeds in the boundary of Teichm\"uller
space and thus has dimension less than $6g-5$.  In fact,
G.~Levitt~\cite{levitt-these} proved that an orientable foliation
carries at most $3g-3$ distinct projective ergodic measures, see also
\cite{papa-86}.

It is not easy to exhibit a non-uniquely ergodic minimal
foliation. Examples come from interval exchange transformations
(IET). Indeed, the mapping torus of an IET gives rise to a foliated
surface (through zippering process). H.~Keynes and D.~Newton~\cite{kn}
and M.~Keane \cite{keane-non-ue-iet} gave examples of non-uniquely
ergodic minimal IET (and thus of non-uniquely ergodic minimal
foliations on a surface). E.~Sataev~\cite{sataev} constructed minimal
IET with exactly $k$ ergodic measures for $1\leq k\leq 2g$. The main
tool to investigate ergodic properties of the foliation of an IET is
to use a suitable induction such as Rauzy-Veech induction.

In the case of Outer space, there are fewer models available to
describe points in the boundary $\partial\CVN$. In particular,
I.~Kapovich and M.~Lustig~\cite{kl-incompatibility} proved that there
is no $\Out(\FN)$-equivariant continuous embedding of $\partial\CVN$
in the space of currents.  Although D.~Gaboriau and
G.~Levitt~\cite{gl-rank} proved that the dimension of $\partial\CVN$
is $3N-5$ (see also V.~Guirardel~\cite{guir-dynamic} where this result is
reinterpreted in terms of length measures), we cannot deduce from this
fact a bound on the number of ergodic currents dual to an $\R$-tree.
This leads us to follow the alternative strategy of analysing the
ergodic properties via an induction.


\subsection{Outline of the proof}

The general strategy to prove Theorem~\ref{thm:main} consists in
mimicking what has been done for interval exchange
transformations. 
However, a tree in $\partial\cvn$ is not necessarily transverse to the
foliation of an interval exchange transformation. Thus we cannot use
the Rauzy-Veech induction.  Instead, for an $\R$-tree $T$ in
$\partial\CVN$ with dense orbits and a basis $A$ of $\FN$, we defined,
together with M.~Lustig~\cite{chl4}, a kind of a band complex
$S_A=(K_A,A)$: the mapping torus of the system of isometries on the
compact heart given by $A$. For such a band complex we can use the unfolding
induction which consists in either the Rips induction~\cite{ch-a} or
the splitting induction~\cite{chr}.

First, let us recall that a current can be seen as a Kolmogorov
function. Given a graph $\Gamma$ and a marking isomorphism
$\FN\stackrel{\sim}{\to}\pi_1(\Gamma)$, the universal cover $\tilde\Gamma$ is a
(simplicial) tree with an action of $\FN$. Points in $\partial\FN$
correspond to points in the boundary of $\partial\tilde\Gamma$ and
elements $(X,Y)\in\partial^2\FN$ are represented by bi-infinite lines in
$\tilde\Gamma$. The topology on $\partial^2\FN$ is given by the
cylinders: sets of lines that share a common subpath. Currents are
completely described by the measures of the cylinders: For a current
$\mu$ and a finite path $\gamma$ in $\Gamma$ (or rather its lift in
$\tilde\Gamma$), the Komolgorov function associated to $\mu$, assigns
to $\gamma$ the measure $\mu(\gamma)$ of the cylinder defined by
$\gamma$. (Note that the fuzzyness about the absence of base-point in
the marking isomorphism or the choice of a lift of $\gamma$ in
$\tilde\Gamma$ is justified by the $\FN$-invariance of currents.)

For a marked graph $\Gamma$ and a current $\mu$ we consider the
non-negative vector $\mu_\Gamma=(\mu(e))_{e\in E(\Gamma)}$ of the
$\mu$-measure of the cylinders defined by the edges of $\Gamma$. The
unfolding induction, starting from an indecomposable $\R$-tree $T$ in
$\partial\CVN$, produces a sequence $(\Gamma_n)$ of marked graphs and
a sequence of non-negative integer matrices $(M_n)$ such that for any
current $\mu$ dual to $T$:
\begin{enumerate}
\item\label{item:split-ind-1} the sequence of vectors
  $(\mu_{\Gamma_n})$ completely determines $\mu$,
\item\label{item:split-ind-2} $\mu_{\Gamma_n}=M_n\mu_{\Gamma_{n+1}}$.
\end{enumerate}
Using properties~(\ref{item:split-ind-1}) and
(\ref{item:split-ind-2}), we derive that the bound on the the
dimension of the simplex of currents dual to $T$ is given by the Euler
characteristic of the graphs $\Gamma_n$ which remains constant through
splitting induction.

In fact, the sequence of marked graphs $(\Gamma_n)$ given by the
unfolding induction limits to the support of the currents dual to
$T$. It also gives a decomposition of the cylinders of the dual
lamination $L(T)$ defined by the edges of $\Gamma_n$ similar to the
Kakutani-Rokhlin towers approximations of a dynamical system -- see
for instance~\cite[Definition~6.4.1]{durand-cant6} .

In Section~\ref{sec:ue}, we adress the question of unique
ergodicity: an $\R$-tree in $\partial\CVN$ is uniquely ergodic if it
is dual to a unique projective current. In the context of IET, there is
a famous sufficient condition known as Masur
criterion~\cite{masur}. This criterion can be understood using the
sequence of matrices of the Rauzy-Veech induction, see for instance
\cite{yocc-2005}.  We derive such a criterion for $\R$-trees in
$\partial\CVN$ in terms of the sequence of matrices of the splitting
induction.

\subsection*{Acknowledgements}

We would like to thank John Smillie and Gilbert Levitt for giving us
key references and Hossein Namazi and Alexandra Pettet for pointing
out mistakes in previous versions.

\section{Preliminaries}

\subsection{Trees and dual laminations}

An $\R$-tree $T$ is a $0$-hyperbolic metric space. It has a Gromov
boundary $\partial T$.  We denote by $\hat T=\bar T\cup\partial T$ the
union of the metric completion of $T$ and its Gromov boundary. This is
a topological space which is not compact in general. A
\textbf{direction} $d$ at a point $P$ in $\hat T$ is a connected
component of $\hat T\ssm\{P\}$. We weaken the topology on $\hat T$ by
considering the set of directions as a sub-basis of open sets, we
denote by $\Tobs$ the resulting topological space which is Hausdorff,
compact and has exactly the same connected subsets as $\hat
T$~\cite{chl2}. Indeed $\Tobs$ is a dendrite in the terminology of
B.~Bowditch \cite{bowd-tree}.

We denote by $\partial\FN$ the Gromov-boundary of $\FN$: $\partial\FN$
is a Cantor set. Fixing a basis $A$ of $\FN$, elements of $\FN$ are
finite reduced words in $A^{\pm 1}$ and elements of $\partial\FN$ are
infinite reduced words in $A^{\pm 1}$. Let $T$ be an $\R$-tree in
$\partial\cvn$ with dense orbits. For a point $P$ in $T$ the orbit map
$\FN\to T$, $u\mapsto uP$, has a unique continuous extension to a map
$\CQ:\partial\FN\to\Tobs$. This map $\CQ$ does not depend on the
choice of $P$ \cite{ll-north-south, ll-periodic,chl2}.

We denote by $\partial^2\FN=(\partial\FN)^2\ssm\Delta$ the double
boundary of $\FN$, where $\Delta$ stands for the diagonal. The \textbf{dual
lamination} \cite{chl1-I,chl1-II} of the tree $T$ is
\[
L(T)=\{(X,Y)\in\partial^2\FN\ |\ \CQ(X)=\CQ(Y)\}.
\]
This is a closed, $\FN$-invariant, flip-invariant, subset of
$\partial^2\FN$.

The map $\CQ$ induces a continuous \cite{chl4} map $\CQ^2: L(T)\to\bar
T$ (here the topology on $\bar T$ is the metric topology). The
\textbf{limit set} $\Omega=\CQ^2(L(T))$ of $T$ is the image of the
lamination by $\CQ$. The tree $T$ is of \textbf{Levitt type} if the
limit set is totally disconnected.

\subsection{Compact heart}

Let $A$ be a basis of $\FN$. Elements of $\FN$ are vertices of the
Cayley graph of $\FN$ with respect to the basis $A$. Elements of
$\partial\FN$ are infinite reduced paths starting at $1$. Elements of
$\partial^2\FN$ are identified with bi-infinite reduced
paths in the Cayley graph indexed by $\Z$.  The \textbf{unit cylinder} of
$\partial^2\FN$ is the set of bi-infinite reduced paths
going through $1$ at index $0$:
\[
C_A(1)=\{(X,Y)\in\partial^2\FN\ |\ X_0\neq Y_0\}
\]
where $X_0$ is the first letter of the infinite word $X$.  The unit
cylinder $C_A(1)$ is a compact set. We denote by $L_A(T)=L(T)\cap
C_A(1)$ the symbolic lamination relative to $A$. It is a compact
set. Its image by $\CQ^2$ is the \textbf{compact limit set} relative to $A$:
\[
\Omega_A=\CQ^2(L_A(T))=\CQ^2(L(T)\cap C_A(1))\subseteq\bar T.
\]
The convex hull of $\Omega_A$ is the \textbf{compact heart} $K_A\subseteq\bar
T$ of the tree $T$ relative to $A$.

For each element $a\in A$ of the basis we consider its restriction to
$K_A$ as a partial isometry. By allowing inverses and composition we
get a pseudo-action of $\FN$ on $K_A$. We denote by $S_A=(K_A,A)$ this
system of isometries \cite{chl4}.

An element $u$ of $\FN$ is \textbf{admissible} if it is non-empty as a
partial isometry. An infinite reduced word $X\in\partial\FN$ is
\textbf{admissible} if all its prefixes are admissible. In this case
the domains of the prefixes form a nested sequence of non-empty compact
subtrees of the compact heart $K_A$ and the \textbf{domain} of $X$ is
their intersection, it is exactly the image of $X$ by the map $\CQ$:
\[
\{\CQ(X)\}=\dom X=\bigcap_{n\in\N}\dom(X_n)
\]
where $X_n$ is the prefix of length $n$.  A bi-infinite reduced word
$Z$ has two halves and we write $Z=(Z_-,Z_+)\in C_A(1)$. It is
\textbf{admissible} if all its finite factors are admissible, or
equivalently, if its two halves are admissible with the same domain.
In this case, the \textbf{domain} of $Z$ is the common domain of its two
halves:
\[
\dom(Z)=\{\CQ(Z)\}=\{\CQ(Z_+)\}=\{\CQ(Z_-)\}.
\]
The set of admissible bi-infinite words of the system of isometries
$S_A=(K_A,A)$ is the \textbf{admissible lamination} $L(S_A)$. This is
a shift-invariant symmetric closed subset of the full-shift of
bi-infinite reduced words in $A^{\pm 1}$. We remark that a bi-infinite
reduced word $Z$ corresponds to a pair $(Z_-,Z_+)$ in $\partial^2\FN$,
and for a pair $(X,Y)\in\partial^2\FN$, we can define the bi-infinite
reduced word $X\inv Y$. This correspondance is rather between a
shift-orbit of bi-infinite reduced words in $A^{\pm 1}$ and an
$\FN$-orbit in $\partial^2\FN$. By abuse of notations, using the above
correspondance, we have: 

\begin{prop}[\cite{chl4}]\label{prop:lam-repr}
  Let $T$ be an $\R$-tree with a minimal, very small action of $\FN$
  by isometries with dense orbits. Let $A$ be a basis of $\FN$ and let
  $S_A=(K_A,A)$ be the associated system of isometries.

  Then, the admissible lamination of $S_A$ is equal to the dual
  lamination of $T$: $L(S_A)=L(T)$.
\end{prop}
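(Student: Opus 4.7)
The plan is to prove the two inclusions $L(S_A) \subseteq L(T)$ and $L(T) \cap C_A(1) \subseteq L(S_A)$; the appearance of $C_A(1)$ reflects the implicit identification between $\FN$-orbits in $\partial^2 \FN$ and shift-orbits of bi-infinite reduced words in $A^{\pm 1}$, noting that every $\FN$-orbit in $\partial^2\FN$ meets $C_A(1)$.

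First I would handle the easy inclusion $L(S_A) \subseteq L(T)$. Take an admissible bi-infinite word $Z = (Z_-, Z_+) \in C_A(1)$ in $L(S_A)$. The excerpt already records that admissibility of $Z$ is equivalent to both halves $Z_-, Z_+$ being admissible infinite words with a common domain, and that for an admissible infinite word $X$ this domain equals $\{\CQ(X)\}$. Hence $\CQ(Z_-) = \CQ(Z_+)$, which is exactly the statement $(Z_-, Z_+) \in L(T)$.

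The reverse inclusion $L(T) \cap C_A(1) \subseteq L(S_A)$ is the substantial direction. Given $(X, Y) \in L(T) \cap C_A(1)$ with $P := \CQ(X) = \CQ(Y) \in \Omega_A \subseteq K_A$, I would show that each prefix $X_n$ (and symmetrically $Y_n$) is a non-empty partial isometry of $K_A$ and that the nested sequence of domains intersects in $\{P\}$. Admissibility of the whole bi-infinite word would then follow: a finite factor straddling position $0$ is a composition of these prefix partial isometries whose intermediate image points all trace the bi-infinite Cayley-graph path through $P$; a factor contained in a single half is a subword of an admissible prefix, hence admissible. The general case reduces to these by shift-invariance of admissibility together with the $\FN$-invariance of $L(T)$.

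The main obstacle is the crux step of showing $X_n^{-1} \cdot P \in K_A$ for every $n$. The subtlety is that the convergence $X_n \cdot Q_0 \to P$ built into the definition of $\CQ$ takes place in the observers' topology $\Tobs$, which is strictly weaker than metric convergence in $\bar T$; direct distance bounds are unavailable, so one cannot naively deduce that $X_n^{-1} \cdot P$ stays close to $Q_0$. This is precisely where the construction of $K_A$ as the convex hull of $\Omega_A$, and the fact that $P \in \Omega_A$ is realized by the two-sided tracking provided by both $X$ and $Y$, must be exploited: the two-sided structure should force the translates $X_n^{-1} \cdot P$ to remain in the compact heart for all $n$. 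Making this precise will require a careful analysis of the interplay between the $\FN$-action on $T$, the observers' topology, and the metric completion.
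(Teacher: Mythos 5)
The paper gives no proof of this proposition: it is quoted verbatim from \cite{chl4}, so there is no in-paper argument to compare against, and your attempt must be judged on its own. Your decomposition into the two inclusions is the right one, and the inclusion $L(S_A)\subseteq L(T)$ is correctly dispatched using the paper's standing identification of the domain of an admissible ray $X$ with the singleton $\{\CQ(X)\}$.

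The proof is, however, genuinely incomplete at exactly the point you flag as the crux: you never establish that $X_n^{-1}\cdot P\in K_A$, and your diagnosis of why this is delicate points in the wrong direction. No interplay between the observers' topology and the metric topology needs to be analysed here; the step closes formally from definitions already in place. Since $L(T)$ is $\FN$-invariant and $\CQ$ is $\FN$-equivariant, the translated pair $X_n^{-1}\cdot(X,Y)=(X_n^{-1}X,\,X_n^{-1}Y)$ lies in $L(T)$, and it lies in the unit cylinder $C_A(1)$ because $X_n^{-1}X$ begins with the $(n+1)$-st letter of $X$ while $X_n^{-1}Y$ begins with the inverse of the $n$-th letter of $X$ (these differ since $X$ is reduced). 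Hence $X_n^{-1}P=\CQ^2\bigl(X_n^{-1}\cdot(X,Y)\bigr)\in\Omega_A\subseteq K_A$ by the very definition of the compact limit set as $\CQ^2(L(T)\cap C_A(1))$; the same holds for the prefixes of $Y$. Every finite factor $X_m^{-1}Y_n$ of the bi-infinite word is then non-empty as a partial isometry, because the point $X_m^{-1}P$ lies in its domain, all intermediate translates being in $\Omega_A$. In other words, what rescues the argument is not the ``two-sided tracking'' you invoke but the fact that $\Omega_A$ was \emph{defined} so that invariance of $L(T)$ hands you membership of every such translate for free; the genuinely hard facts (that $\CQ^2$ takes values in $\bar T$, that $\Omega_A$ is compact, that domains of admissible rays are singletons) are part of the paper's stated setup and may be used as such. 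As written, your text stops at ``making this precise will require a careful analysis,'' which leaves the essential step unproved.
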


For a word $w\in\FN$, the \textbf{cylinder} $C_A(w)\subseteq L(S_A)$
is the set of bi-infinite admissible words $Z$ in $S_A$ which reads
$w$ at index $0$: $w$ is a prefix of the positive half $Z_+$.  The
shifts of the cylinders form a sub-basis of open sets of the
admissible lamination.

\subsection{Systems of isometries}

More generally a \textbf{system of isometries} $S=(F,A)$ is a compact
forest $F$ together with a finite set $A$ of partial isometries of
$F$. Here a \textbf{compact forest} is a disjoint union of finitely
many compact $\R$-trees. A partial isometry is a non-empty isometry
between two compact subtrees. The graph $\Gamma=\Gamma(S)$ of such a
system of isometries has the connected components of $F$ as vertices
and the partial isometries in $A$ as edges. The edge $a\in A$ goes
from the connected component that contains its domain to the connected
component of that contains its image. We always assume that $\Gamma$
is connected.

A finite reduced path $\gamma$ in $\Gamma$ is \textbf{admissible} if
it is non-empty as a partial isometry. Its domain $\dom(\gamma)$ is a
non-empty compact subtree of $F$. The infinite reduced path $X$ in
$\Gamma$ is \textbf{admissible} if all its subpaths are
admissible. Its domain $\dom(X)$ is the intersection of the nested
domains of the initial subpaths of $X$. A bi-infinite reduced
path $Z=\cdots Z_{-1}Z_0Z_1Z_2\cdots$ has to halves
$Z_-=Z_0\inv Z_{-1}\inv\cdots$ and $Z_+=Z_1Z_2\cdots$. It is
\textbf{admissible} if all its subpaths are admissible or equivalently
if its two halves are admissible and its \textbf{domain}
$\dom(Z)=\dom(Z_-)\cap\dom(Z_+)$ is a non-empty compact subtree of
$F$.

The \textbf{admissible lamination} $L(S)$ of the system of isometries
$S=(F,A)$ is the set of bi-infinite admissible paths in $\Gamma$. For
a finite reduced path $\gamma$ in $\Gamma$, the \textbf{cylinder}
$C(\Gamma,\gamma)$ is the set of bi-infinite admissible paths $Z$ that
goes through $\gamma$ at index $0$: $\gamma$ is a prefix of the
positive half $Z_+$.

Following D.~Gaboriau~\cite{gab-indgen}, a system of isometries has
\textbf{independent generators} if the domain $\dom(X)$ of any
infinite admissible path $X$ in $\Gamma$ contains a single point which
we denote by $\CQ(X)$.

\subsection{Currents}\label{sec:currents}

A \textbf{current} for the free group $\FN$ is an $\FN$-invariant,
flip-invariant, Radon measure (that is to say a Borel measure which is
finite on compact sets) on $\partial^2\FN$. The support of a current
is a lamination.

Let $T$ be an $\R$-tree with a minimal very small action of $\FN$ by
isometries with dense orbits. We denote by $\Curr(T)$ the simplex of
currents carried on the dual lamination $L(T)$. Let $A$ be a basis of
$\FN$ and $S_A=(K_A,A)$ be the system of isometries on the compact
heart of $T$. Recall that we identify the dual lamination $L(T)$ with
the admissible lamination $L(S_A)$ which is the shift of bi-infinite
admissible words in $A^{\pm 1}$. In this setting a current if a
shift-invariant, flip-invariant finite Borel measure on
$L(S_A)$~\cite{kapo-currents,chl1-III}.

For a current $\mu\in\Curr(T)$ and a word $w\in\FN$, we
denote by $\mu(w)=\mu(C_A(w))$ the finite measure of the cylinder of
$w$.  As the cylinders generate the topology, the current $\mu$ is
completly determined by the values $\mu(w)$, for $w\in\FN$.

\begin{prop}\label{prop:periodic-leaves}
  Let $T$ be an $\R$-tree with a free minimal action of $\FN$ by
  isometries with dense orbits. Let $\mu\in\Curr(T)$ be a current
  carried by the dual lamination of $T$.
  Then $\mu$ has no atomes.
\end{prop}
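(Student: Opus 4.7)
The plan is a contradiction argument. Suppose $\mu$ has an atom at some $(X,Y)\in L(T)$; up to replacing $(X,Y)$ by an $\FN$-translate (which, by $\FN$-invariance of $\mu$, is an atom of the same mass) we may assume $(X,Y)\in C_A(1)$. Via Proposition~\ref{prop:lam-repr} we identify $(X,Y)$ with the admissible bi-infinite reduced word $Z$ in $A^{\pm 1}$ with pivot at index~$0$. Under the correspondence between the $\FN$-orbit of $(X,Y)\in\partial^2\FN$ and the shift-orbit of $Z$, each shift $\sigma^nZ$ corresponds to a distinct $\FN$-translate of $(X,Y)$ lying in $C_A(1)$, which is therefore also an atom of mass $m:=\mu(\{(X,Y)\})>0$.

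Since $C_A(1)$ is compact and $\mu$ is Radon, $\mu(C_A(1))<\infty$, which forces the shift-orbit of $Z$ to be finite. Hence $Z$ is periodic: there is a cyclically reduced $w\in\FN\ssm\{1\}$ with $Z=\cdots www\cdots$, so $(X,Y)=(w^{-\infty},w^{+\infty})$ is the pair of fixed points at infinity of $w$.

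Now I invoke the free action hypothesis. Because $T\in\barcvn$ the action is very small, and combined with freeness this forces the non-trivial element $w$ to act on $T$ as a hyperbolic isometry with a translation axis $\ell_w\subset T$; its only fixed points in $\Tobs$ are the two distinct endpoints $w^+,w^-\in\partial T$ of $\ell_w$. By $\FN$-equivariance of $\CQ$, each of $\CQ(w^{\pm\infty})$ is fixed by $w$ in $\Tobs$, hence lies in $\{w^+,w^-\}$. For any $P\in T$, standard hyperbolic dynamics give $w^nP\to w^+$ in $\bar T$ as $n\to+\infty$, hence also in the coarser topology of $\Tobs$; while by the defining property of $\CQ$ as the continuous extension of the orbit map $u\mapsto uP$, we have $w^nP\to\CQ(w^{+\infty})$ in $\Tobs$. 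Therefore $\CQ(w^{+\infty})=w^+$, and symmetrically $\CQ(w^{-\infty})=w^-$. Since $w^+\neq w^-$, this contradicts $(X,Y)\in L(T)$, which would require $\CQ(X)=\CQ(Y)$.

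The main subtlety is showing $w$ acts hyperbolically rather than merely fixed-point-freely on $T$: freeness rules out elliptic behaviour, but it is the very-small nature of the action in $\barcvn$ that excludes a parabolic scenario in which the two fixed points at infinity could collapse to a single point of $\Tobs$.
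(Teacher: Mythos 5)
Your proof is correct and follows essentially the same route as the paper's: an atom forces a finite shift-orbit because $\mu$ is finite on the compact unit cylinder, hence a periodic leaf $(w^{-\infty},w^{+\infty})$, which is then ruled out by freeness via the equivariant map $\CQ$. The only cosmetic difference is the endgame (the paper observes that the period word fixes the single point $\CQ(Z_+)$ and invokes freeness directly, while you identify $\CQ(w^{\pm\infty})$ with the two distinct ends of the axis of $w$), and your worry about a parabolic case is moot: isometries of $\R$-trees are semisimple, so freeness alone already forces $w$ to be hyperbolic, with no need for the very-small hypothesis there.
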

\begin{proof}
  Let $A$ be a basis of $\FN$ and let $S_A=(K_A,A)$ be the associated
  system of isometries.  Let $Z$ be a bi-infinite admissible word such
  that $\mu(Z)>0$. Then, as $\mu$ is shift-invariant, denoting by
  $\sigma$ the shift-map
\[
\mu(\{\sigma^nZ\ |\ n\in\Z\})=\mu(Z)\cdot\big|\{\sigma^nZ\ |\ n\in\Z\}\big|\leq\mu(L(S_A))<\infty.
\]
Thus the shift-orbit of $Z$ is finite and there exists $n>0$ such that
$\sigma^nZ=Z$. Let $u$ be the prefix of the positive half $Z_+$ of
length $n$. We have $u\inv Z_+=Z_+$ and using the equivariant map
$\CQ$
\[
\CQ(Z_+)=u\inv\CQ(Z_+).
\]
But the action of $\FN$ on $T$ is free, a contradiction.
\end{proof}

\subsection{Non-negative matrices}\label{sec:matrices}

We recall here basic facts that are folklore in ergodic theory. The
statements here are suitable in our context. In the analog context of
interval exchange transformations we refer to the course of
J.-C.~Yoccoz \cite{yocc-2005} (in particular Corollary~III.5 and the
proof of Proposition~IV.10).

For integers $d\geq 1$, $1\leq i\leq d$ and $0\leq j\leq d$ we consider
the matrices $A_d^i\in M_{d\times (d+1)}(\Z_{\geq 0})$ and $B_d^{i,j}\in M_{(d+1)\times
  d}(\Z_{\geq 0})$:
\[
A_d^i=\left(\begin{array}{c|c}
I_d&C_d^i
\end{array}\right)
\text{ et }
B_d^{i,j}=\left(\begin{array}{c|c}\makebox{}I_j&0\\\hline \multicolumn{2}{c}{L_d^i}\\\hline 0&I_{d-j}\end{array}
\right)
\]
where $C_d^i$ is the column vector of height $d$ with $0$ coefficients
except the $i$-th coefficient which is $1$ and $L_d^i=^tC_d^i$ is the
line vector of length $d$ with $0$ coefficients except the $i$-th
coefficient which is $1$.

Let $(d_n)$ be a sequence of positive integers and $(M_n)$ be a
sequence of $d_n\times d_{n+1}$ integer non-negative matrices of the
form $A_{d_n}^{i_n}$ or $B_{d_n}^{i_n,j_n}$. For such a sequence we denote
by
\[D=\liminf d_n.\]
The \textbf{positive cone} of the sequence $(M_n)_{n\in\N}$ is the set
\[
C=\{(v_n)_{n\in\N}\ |\ \forall n\in\N,\ v_n\in \R_{\geq 0}^{d_n},\text{ and }
v_n=M_nv_{n+1}\}.
\]

\begin{lem}\label{lem:matrices}
  The positive cone $C$ has projective dimension at most $D-1=\liminf d_n-1$.
\end{lem}
\begin{proof}
By linearity, the dimension of $C$ is bounded above by
$\liminf d_n$ and thus the projective dimension is bounded above by
$\liminf d_n-1$.
\end{proof}

We now state a Lemma that will be used to get a unique
ergodicity criterion in Section~\ref{sec:ue}.

\begin{lem}\label{lem:matrix-ue}
 Assume that there exists $L\geq 1$ and infinitely many $n$ such that 
\begin{enumerate}
\item $d_n=d_{n+L}=D$,
\item the square matrix $M_{[n,n+L-1]}=M_n\cdots M_{n+L-1}$ has
  strictly positive entries.
\end{enumerate}
  Then, the projective positive cone defined by the sequence $(M_n)$
  contains excatly one point.
\end{lem}
\begin{proof}
  The matrices $M_{[n,n+L-1]}$ are non-negative integer matrices.
The entries of $M_{[n,n+L-1]}$ are
  bounded by some constant depending only on $L$. The matrices
  $M_{[n,n+L-1]}$ uniformly contract the Hilbert distance in the
  positive cone. Thus $C$ has zero diameter.
\end{proof}

\section{Levitt case}\label{sec:levitt-case}

\subsection{Rips induction}\label{sec:rips-induction}

Let $S=(F,A)$ be a system of isometries with graph $\Gamma$.  Let $F'$
be the set of elements of $F$ which belongs to the domains of at least
two partial isometries in $A^{\pm 1}$. The set $F'$ is the union of
the intersections of the domains of all possible pairs of distinct
elements in $A^{\pm 1}$:
\[
F'=\{P\ |\ \exists a\neq b\in A^{\pm 1},\ P\in
\dom(a)\cap\dom(b)\}=\cup_{a\neq b\in A^{\pm 1}}\dom(a)\cap\dom(b).
\]
Thus $F'$ is also a compact forest.  Let $A'$ be the set of all
possible non-empty restrictions of elements of $A$ to pairs of
connected components of $F'$.  The system of isometries $S'=(F',A')$
is obtained from $S$ by \textbf{Rips induction}.

Let $\Gamma'$ be the graph of $S'$ and let $\tau:\Gamma'\to\Gamma$ map
a vertex $K'$ of $\Gamma'$ (which is a connected component of $F'$) to
the connected component $\tau(K')=K$ of $F$ that contains $K'$.
Similarly, $\tau$ maps the edge $a'\in A'^{\pm 1}$ to the edge $a\in
A^{\pm 1}$ of which $a'$ is a restriction.

The system of isometries $S=(F,A)$ is \textbf{reduced}~\cite{ch-a} if
\begin{enumerate}
\item the graph $\Gamma$ is connected,
\item it has independent generators,
\item for each $P\in F$ there exists at least one infinite reduced
  admissible path $X$ such that $\CQ(X)=P$ and,
\item for each partial isometry $a\in A^{\pm 1}$, and each extremal
  point $P$ in $\dom(a)$, $P$ is in $F'$.
\end{enumerate}
We remark that if the system of isometries $S$ is reduced then the
graph $\Gamma$ does not have vertices of valence $1$.

We summarize our previous work in the following Proposition:

\begin{prop}[{\cite[Propostions~3.12, 3.13 and 5.6]{ch-a}}]\label{prop:Rips-tau}
  Let $S$ be a reduced system of isometries and $S'$ be the system of
  isometries obtained by Rips induction. Then $S'$ is reduced and the
  map $\tau:\Gamma'\to\Gamma$ is a homotopy equivalence. Moreover, the
  map $\tau$ induces a one-to-one correspondance between bi-infinite
  admissible paths in $S$ and $S'$: For any bi-infinite admissible
  path $Z'\in L(S')$, the bi-infinite path $\tau(Z')$ is reduced
  and admissible and, for any bi-infinite admissible path $Z$ in
  $\Gamma$ there exists a unique bi-infinite admissible path $Z'\in
  L(S')$ such that $\tau(Z')=Z$. By abuse of notations we write
\[
L(S)=L(S').\qedhere
\]
\end{prop}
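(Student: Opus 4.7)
The proposition packages four claims: (i) $S'$ is reduced, (ii) $\tau:\Gamma'\to\Gamma$ is a homotopy equivalence, (iii) for every $Z'\in L(S')$ the image $\tau(Z')$ is a bi-infinite reduced admissible path in $\Gamma$, and (iv) this projection is a bijection $L(S')\to L(S)$. The plan is to address them in this order, since (i) lets us iterate Rips induction and provides the explicit description of $\tau$ needed for (ii), while (iii) is a short verification whose twin (iv) contains the real content.

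For (i) I would verify the four defining conditions of reducedness separately. Connectedness of $\Gamma'$ and the existence of infinite admissible paths through every point of $F'$ propagate from $S$ via an orbit-tracking argument: the key point is that an admissible path in $\Gamma$ whose start and forward iterates lie in $F'$ has its entire trajectory inside $F'$, by repeated application of reducedness and the double-coverage definition of $F'$. Independent generators for $S'$ follows directly from independent generators for $S$, since $\tau$ sends admissible paths to admissible paths with the same domain. The extremal-point condition (4) for $S'$ is the most delicate and is where I expect the combinatorial bookkeeping to be heaviest: one has to show that every extremal point of every $\dom(a')$ with $a'\in A'^{\pm1}$ lies in the analogue of $F'$ for $S'$, which requires unpacking how restricting partial isometries to connected components of $F'$ interacts with the formation of new extremal points.

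For (ii) I would give the explicit combinatorial description of $\tau$: a vertex $K$ of $\Gamma$ has preimage the finite set of connected components of $K\cap F'$, and an edge $a\in A^{\pm1}$ has preimage its non-empty restrictions to pairs of such components. One can then show that each maximal subgraph of $\Gamma'$ that projects to a single vertex of $\Gamma$ is a tree and collapse it without changing homotopy type; a count of vertices and edges confirms that the Euler characteristics agree, and surjectivity of $\tau_*$ on $\pi_1$, supplied by lifting admissible loops, promotes the resulting combinatorial equivalence to a genuine homotopy equivalence.

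For (iii) and (iv) the essential input is that if $Z=\cdots z_{-1}z_0z_1\cdots$ is a bi-infinite reduced admissible path in $\Gamma$, then for every $P\in\dom(Z)$ reducedness of $Z$ forces $z_1\neq z_0^{-1}$ as elements of $A^{\pm1}$, hence $P\in\dom(z_0^{-1})\cap\dom(z_1)\subseteq F'$. Applying this to every shift of $Z$ shows that the whole orbit of $P$ stays in $F'$, so the letters of $Z$ lift uniquely to restricted partial isometries in $A'$; this yields both (iii) and the inverse of (iv). The principal obstacle I anticipate is condition (4) within (i): verifying that the double-coverage property is preserved under restriction is what makes Rips induction a well-defined recursive operation on reduced systems, and everything downstream hinges on it.
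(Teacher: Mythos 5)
First, a point of comparison: the paper does not prove this proposition at all --- it is imported wholesale from [CH10] (Propositions~3.12, 3.13 and 5.6) and stamped with a \verb|\qed|, so there is no internal argument to measure your proposal against; what follows judges your sketch on its own terms. You do get the most-used part of the statement right. The observation that a bi-infinite reduced admissible path $Z$ satisfies $\dom(Z)\subseteq\dom(z_0^{-1})\cap\dom(z_1)\subseteq F'$, applied to every shift, so that the orbit of any point of $\dom(Z)$ stays in $F'$ and the letters of $Z$ restrict canonically to elements of $A'^{\pm1}$, is exactly the mechanism behind the bijection $L(S)=L(S')$; combined with the (omitted but easy) check that two composable restrictions of $a$ and $a^{-1}$ with nonempty composition must be mutually inverse, so that $\tau(Z')$ cannot acquire a backtrack, this settles (iii) and (iv), including uniqueness of the lift.

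The genuine gaps are in (i) and (ii), and your plan defers rather than closes them. For (ii), the description of $\tau$ is off: $\tau$ sends vertices to vertices and edges to edges and collapses nothing, so there are no ``maximal subgraphs of $\Gamma'$ projecting to a single vertex'' to contract; $\tau$ is a folding-type map, and the entire content is precisely that $\chi(\Gamma')=\chi(\Gamma)$ and that $\tau_*$ is $\pi_1$-surjective. Neither is a routine count: a vertex $K$ of $\Gamma$ may blow up into many components of $K\cap F'$ while each edge only multiplies by its number of nonempty restrictions, so the Euler characteristic could a priori change, and admissible loops need not generate $\pi_1(\Gamma)$ (for a free action there may be essentially none), so ``lifting admissible loops'' does not supply surjectivity. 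In [CH10] this is exactly where the index theory for systems of isometries with independent generators (Gaboriau's bound) enters, fed by conditions (3) and (4) of reducedness; without that input your Euler-characteristic claim is unsupported. Likewise condition (4) for $S'$ --- that extremal points of the domains of the restricted isometries lie in $F''$ --- is what makes the induction iterable; you correctly identify it as the crux but offer no argument. So the proposal is a sound skeleton with the correct key lemma for the lamination correspondence, but the two structural assertions (reducedness of $S'$ and the homotopy equivalence) remain unproved, and the route sketched for the homotopy equivalence would not work as written.
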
\qed

We now proceed to analyse cylinders of the lamination.

\begin{prop}\label{prop:Rips-partition}
  Let $S=(F,A)$ be a reduced system of isometries with graph
  $\Gamma$. Let $S'$ be obtained by Rips induction from $S$. Let
  $\Gamma'$ be the graph of $S'$ and $\tau:\Gamma'\to\Gamma$ be the
  graph map.

Then, for each edge $e$ of $\Gamma$
\[
C(\Gamma,e)=\biguplus_{e'}C(\Gamma',e').
\]
where the disjoint union is taken over all edges $e'$ of $\Gamma'$
such that $\tau(e')=e$.
More generally, for any finite admissible path $w$ in $\Gamma$
\[
C(\Gamma,w)=\biguplus_{w'}C(\Gamma',w').
\]
where the disjoint union is taken over all finite reduced paths $w'$ of $\Gamma'$
such that $\tau(w')=w$.

\end{prop}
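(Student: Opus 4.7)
The plan is to obtain both statements directly from the bijection $L(S)=L(S')$ provided by Proposition~\ref{prop:Rips-tau}. The edge case is literally the length-$1$ instance of the general statement, so I would phrase a single argument that handles finite reduced paths $w$ and then note that it specialises to edges.

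First I would set up the vocabulary. Since $\tau\colon\Gamma'\to\Gamma$ sends each edge of $\Gamma'$ (a non-empty restriction of some $a\in A^{\pm1}$ to a pair of connected components of $F'$) to the corresponding edge of $\Gamma$, any finite reduced path $w'$ in $\Gamma'$ maps to a finite reduced path $\tau(w')$ in $\Gamma$ of the same length. In particular the sum on the right of the proposition is taken over finite reduced paths $w'$ of length $|w|$; if some such $w'$ fails to be admissible in $S'$, the cylinder $C(\Gamma',w')$ is simply empty and contributes nothing.

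Next I would prove the inclusion $\supseteq$. If $Z'\in C(\Gamma',w')$ then $Z'\in L(S')$ reads $w'$ at index $0$, so $\tau(Z')\in L(S)$ reads $\tau(w')=w$ at index $0$. Hence, under the identification $L(S)=L(S')$ from Proposition~\ref{prop:Rips-tau}, $Z'\in C(\Gamma,w)$. For the reverse inclusion, take $Z\in C(\Gamma,w)\subseteq L(S)$ and let $Z'\in L(S')$ be the unique admissible path with $\tau(Z')=Z$ given by Proposition~\ref{prop:Rips-tau}. Let $w'$ be the prefix of length $|w|$ of the positive half of $Z'$; then $w'$ is admissible in $S'$ and $\tau(w')=w$, so $Z'\in C(\Gamma',w')$, where $w'$ lifts $w$.

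Finally I would check disjointness. Suppose $Z'\in C(\Gamma',w'_1)\cap C(\Gamma',w'_2)$ for two lifts $w'_1,w'_2$ of $w$. Both $w'_1$ and $w'_2$ are prefixes of the positive half of $Z'$ and they share the same length $|w|$; hence $w'_1=w'_2$. This shows the union is disjoint and completes the proof.

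The only subtle point to watch is that edges of $\Gamma'$ over a given edge $e$ of $\Gamma$ need not all be admissible extensions of arbitrary neighbouring letters — some lifts $w'$ of $w$ of length $>1$ may fail to be admissible. As noted above, such $w'$ contribute empty cylinders and cause no problem; the existence half of the bijection guarantees that at least the lift seen by any $Z\in C(\Gamma,w)$ is admissible, which is what the argument actually uses.
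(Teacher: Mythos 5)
Your proof is correct and takes essentially the same route as the paper: lift each $Z\in C(\Gamma,w)$ via the bijection of Proposition~\ref{prop:Rips-tau}, read off the length-$|w|$ prefix $w'$ of the lift to get both the covering and (by uniqueness of that prefix) the disjointness, and use the forward direction of the bijection for the reverse inclusion. The only quibble is your side remark that $\tau$ sends \emph{every} reduced path of $\Gamma'$ to a reduced path of $\Gamma$ -- two distinct restrictions of the same partial isometry can share a target component, so a reduced path in $\Gamma'$ may map to a non-reduced edge path -- but this claim is never actually used, since the paths $w'$ appearing in the union satisfy $\tau(w')=w$ reduced by hypothesis, and the prefixes of admissible lifts map to reduced paths by Proposition~\ref{prop:Rips-tau}.
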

\begin{proof}
  First recall that the equalities in the Proposition are understood
  through the identifaction of $L(S)$ and $L(S')$ via the map $\tau$.

  From Proposition~\ref{prop:Rips-tau}, for each bi-infinite
  admissible path $Z\in C(\Gamma,w)$ there exists a unique bi-infinite
  admissible path $Z'\in L(S')$ such that $\tau(Z')=Z$. Let $w'$ be
  the prefix of $Z'$ of length $|w|$, then $\tau(w')=w$. This
  proves that $Z'$ is in $C(\Gamma',w')$ and that $w'$ is
  unique. Conversely for any path $w'$ of $\Gamma'$ such that
  $\tau(w')=w$ and any bi-infinite reduced path $Z'\in C(\Gamma',w')$,
  $\tau(Z')$ is a bi-infinite reduced admissible path in $C(\Gamma,w)$.
\end{proof}

\subsection{Analysis of the lamination}\label{sec:rips-lamination}

Let $T$ be an $\R$-tree with a free, minimal, action of the free group
$\FN$ with dense orbits. Let $A$ be a basis for $\FN$, and let
$S_0=S_A=(K_A,A)$ be the corresponding system of isometries. Recall
that $S_0$ is reduced~\cite[Proposition~5.6]{ch-a}. We perform
inductively Rips induction to get a sequence $S_n=(F_n,\Gamma_n)$ of
reduced systems of isometries, together with maps
$\tau_n:\Gamma_{n+1}\to\Gamma_n$. By Propositions~\ref{prop:lam-repr}
and \ref{prop:Rips-tau}, for each $n$ we have that
$L(T)=L(S_0)=L(S_n)$.

For a vertex $v_n$ in $\Gamma_n$ we denote by $C(\Gamma_n,v_n)$ the
set of bi-infinite admissible paths that goes through $v_n$ at
index $0$. From Proposition~\ref{prop:Rips-tau} we get that
$\tau_{n-1}(C(\Gamma_n,v_n))\subseteq
C(\Gamma_{n-1},\tau_{n-1}(v_n))$. Again by abuse of notations we
simply write $C(\Gamma_n,v_n)\subseteq
C(\Gamma_{n-1},\tau_{n-1}(v_n))$. Recall also that $v_n$ is a
connected component of the forest $F_n$ and that for each $Z\in
C(\Gamma_n,v_n)$, $\CQ(Z)$ is a point in $v_n$.
 
Let now $(v_n)_{n\in\N}$ be a sequence such that for each $n\in\N$,
$v_n$ is a vertex in $\Gamma_n$ and $\tau_n(v_{n+1})=v_n$. The
connected components $v_n$ of $F_n$ are nested and we denote by
$v_\infty\subset K_A$ their intersection.  We proved~\cite{ch-a} that
the tree $T$ is of Levitt type if and only if for each such sequence
$v_\infty$ consists of a single point.  The cylinders
$C(\Gamma_{n},v_{n})$ are also nested and their intersection is the
set of bi-infinite admissible paths $Z$ in $\Gamma_0$ such that
$\CQ(Z)\in v_\infty$. As the action on $T$ is free, the map $\CQ$ is
finite-to-one~\cite[Corollary~5.4]{ch-a} and thus we proved

\begin{prop}\label{prop:rips-finite-intersections}
  Let $T$ be an $\R$-tree in $\partial\cvn$ with dense orbits and of
  Levitt type.  Let $A$ be a basis of $\FN$ and $(S_n)$ be the
  sequence of systems of isometries obtained from $S_0=S_A=(K_A,A)$ by
  Rips induction. Let $(v_n)$ be sequence of vertices of the graphs
  $\Gamma_n$ such that $\tau_n(v_{n+1})=v_n$. Then, the nested
  intersection of the compact subtrees $v_n$ of $K_A$ is a singleton
  and the nested intersection of the cylinders $C(\Gamma_n,v_n)$ is
  finite. \qed
\end{prop}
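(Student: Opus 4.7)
The proof is essentially built into the preceding paragraph, so my plan is to assemble three ingredients already available.

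First, I would address the singleton claim. The sequence $(v_n)$ is a nested sequence of non-empty compact subtrees of $K_A$ (each $v_n$ is a connected component of $F_n$, and since $\tau_n(v_{n+1}) = v_n$ we have $v_{n+1} \subseteq v_n$). By compactness of $K_A$, the intersection $v_\infty = \bigcap_n v_n$ is non-empty. The characterization of Levitt type from \cite{ch-a} (already recalled in the paragraph before the statement) says that for trees of Levitt type, every such nested intersection coming from Rips induction reduces to a single point. This gives $|v_\infty| = 1$.

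Second, I would identify $\bigcap_n C(\Gamma_n, v_n)$ with $\CQ^{-1}(v_\infty)$, viewed inside the dual lamination $L(T) = L(S_n)$ (using the identifications from Proposition~\ref{prop:Rips-tau}). If $Z$ lies in $C(\Gamma_n, v_n)$ for every $n$, then $\CQ(Z) \in v_n$ for every $n$, hence $\CQ(Z) \in v_\infty$; conversely, if $Z \in L(T)$ satisfies $\CQ(Z) \in v_\infty$, then $\CQ(Z) \in v_n$ forces the bi-infinite admissible path representing $Z$ in $\Gamma_n$ to pass through the vertex $v_n$ at index $0$, i.e. $Z \in C(\Gamma_n, v_n)$.

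Third, I would invoke Corollary~5.4 of \cite{ch-a}: since $\FN$ acts freely on $T$, the map $\CQ$ is finite-to-one. Combined with the fact that $v_\infty$ is a single point, this gives that $\CQ^{-1}(v_\infty)$ is a finite subset of $L(T)$, and hence $\bigcap_n C(\Gamma_n, v_n)$ is finite.

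I do not expect a serious obstacle here: the content of the proposition is really a packaging of \cite[Corollary~5.4]{ch-a} (finite fibers for $\CQ$ under a free action), the Levitt-type characterization, and the compatibility of cylinders with Rips induction given by Proposition~\ref{prop:Rips-tau}. The only mild point of care is keeping straight the abuse of notation $L(S_0) = L(S_n)$, so that a ``bi-infinite admissible path $Z$ in $\Gamma_n$'' and ``its image under the composition of $\tau$'s in $\Gamma_0$'' are treated as the same object when evaluating $\CQ$.
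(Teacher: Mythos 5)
Your proposal is correct and follows exactly the paper's own argument, which is the paragraph immediately preceding the proposition: nestedness plus the Levitt-type characterization from \cite{ch-a} gives the singleton $v_\infty$, the intersection of the cylinders is identified with the set of leaves $Z$ with $\CQ(Z)\in v_\infty$, and finiteness follows from $\CQ$ being finite-to-one by \cite[Corollary~5.4]{ch-a} since the action is free. Your slightly more careful verification of the converse inclusion in the cylinder identification is a welcome but inessential elaboration of what the paper asserts directly.
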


\subsection{Rips induction and currents}

Let $T$ be an $\R$-tree with a free action of $\FN$ by isometries with
dense orbits. Let $A$ be a basis of $\FN$ and $S_0=S_A=(K_A,A)$ be the
associated system of isometries. Let $(S_n)_{n\in\N}$ be the sequence of
systems of isometries obtained from $S_0$ by Rips induction. By
Proposition~\ref{prop:Rips-tau} for each $n\in\N$, we have
$L(S_n)=L(S)$. Let $\Gamma_n$ be the graph of $S_n$ and for each edge
$e$ recall that $C(\Gamma_n,e)$ is the set of bi-infinite admissible
 paths in $\Gamma_n$ that goes through $e$ at index
$0$.

For a current $\mu$ supported on the dual lamination
$L(T)=L(S_0)=L(S_n)$, for each $n\in\N$, for each edge $e$ of
$\Gamma_n$ we consider the finite number
$\mu(e)=\mu(C(\Gamma_n,e))$. If $e$ and $e'$ are two consecutive edges
of $\Gamma_n$ separated by a valence $2$ vertex, any path through $e$
goes through $e'$, and thus $\mu(e)=\mu(e')$. A \textbf{generalized
  edge} $\hat e$ of $\Gamma_n$ is a maximal reduced path such that all
inner vertices have valence exactly $2$. Recall that the maps $\tau_n$
are homotopy equivalences, that $\Gamma_0$ is the rose with $N$ petals
and that the graphs $\Gamma_n$ are connected and do not have vertices
of valence $1$. The graph $\Gamma_n$ has at most $3N-3$ generalized
edges. We denote by $\GE(\Gamma_n)$ the set of generalized edges of
$\Gamma_n$. We pick-up arbitrarily an edge $e$ in each generalized
edge $\hat e$ in $\GE(\Gamma_n)$. As the system of isometries $S_n$
is reduced, there is a bi-infinite admissible path $Z$ going through
each edge $e$ of $\Gamma_n$ and thus through each generalized edge
$\hat e$. By Propostion~\ref{prop:Rips-tau},
$\tau_0\circ\cdots\circ\tau_{n-1}(Z)$ is a bi-infinite reduced path in
$\Gamma_0$ and in particular its finite subpath
$\tau_0\circ\cdots\circ\tau_{n-1}(\hat e)$ is reduced.

The \textbf{incidence matrix} $M_n$ of $\tau_n$ is the non-negative
matrix such that for each pair of generalized edges $\hat e\in
\GE(\Gamma_n)$ and $\hat e'\in \GE(\Gamma_{n+1})$, the coefficient
$M_n(\hat e,\hat e')$ is the number of occurences of $e$ in the
reduced finite path $\tau_n(\hat e')$. We remark that this matrix
depends on the choice of the edge $e$ in the generalized edge $\hat
e$. 

To the current $\mu$ carried by $L(S)$, for each $n\in\N$, we
associate the non-negative vector $\mu_n=(\mu(\hat e))_{\hat
  e\in\GE(\Gamma_n)}$.

\begin{prop}\label{prop:rips-matrix}
  Let $T$ be an $\R$-tree with a free action of $\FN$ by isometries
  with dense orbits. Let $A$ be a basis of $\FN$ and $S_0=S_A=(K_A,A)$
  be the associated system of isometries. Let $(S_n)_{n\in\N}$ be the
  sequence of systems of isometries obtained from $S_0$ by Rips
  induction. Let $\Gamma_n$ be the graph of $S_n$ and $GE(\Gamma_n)$
  its set of generalized edge and, let $M_n$ be the incidence matrix
  of $\tau_n:\Gamma_{n+1}\to\Gamma_{n}$.

  Then, for each current $\mu$ carried by $L(T)$
\[
\mu_n=M_n\mu_{n+1}
\]
where $\mu_n=(\mu(C(\Gamma_n,\hat e)))_{\hat e\in\GE(\Gamma_n)}$ is
the non-negative vector associated to $\mu$ at step $n$.
\end{prop}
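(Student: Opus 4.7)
The plan is to apply Proposition~\ref{prop:Rips-partition} to a single representative edge of each generalized edge, and then repackage the resulting edge-level identity in terms of generalized edges using the incidence matrix. Two preliminary facts make this work: first, $\mu$ takes the same value on any two edges lying in the same generalized edge, so the notation $\mu(\hat e) := \mu(C(\Gamma_n,e))$ is unambiguous for any choice of $e$ in $\hat e$; second, Proposition~\ref{prop:Rips-partition} partitions the cylinder of an edge at level $n$ into the cylinders of its preimages at level $n+1$.

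First I would dispose of the well-definedness. If $e_1, e_2$ are two consecutive edges in a generalized edge $\hat e$, separated by a valence-$2$ vertex, then by reducedness of bi-infinite admissible paths, the edges appearing immediately after $e_1$ and immediately before $e_2$ are forced. Combined with the shift-invariance of $\mu$, this yields $\mu(C(\Gamma_n,e_1))=\mu(C(\Gamma_n,e_2))$, as already indicated in the paper. Iterating along $\hat e$ shows that $\mu(\hat e)$ is well defined.

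Next, for each generalized edge $\hat e\in\GE(\Gamma_n)$ I fix the representative edge $e$ used in the definition of $M_n$, with its chosen orientation. Proposition~\ref{prop:Rips-partition} applied to the admissible path $e$ gives
\[
C(\Gamma_n,e)=\biguplus_{e':\ \tau_n(e')=e} C(\Gamma_{n+1},e'),
\]
where the union is taken over oriented edges $e'$ of $\Gamma_{n+1}$ mapping to the oriented edge $e$ under $\tau_n$. Taking $\mu$-measure (which is $\sigma$-additive on this disjoint union) produces
\[
\mu(\hat e)=\mu(C(\Gamma_n,e))=\sum_{e':\ \tau_n(e')=e}\mu(C(\Gamma_{n+1},e')).
\]

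Finally, I regroup the sum according to the generalized edge of $\Gamma_{n+1}$ containing each $e'$. For a generalized edge $\hat e'\in\GE(\Gamma_{n+1})$, writing $\hat e'$ as a reduced path of oriented edges, the number of its edges $e'$ with $\tau_n(e')=e$ is by definition the number of occurrences of $e$ in the reduced path $\tau_n(\hat e')$, which is exactly the coefficient $M_n(\hat e,\hat e')$. Since $\mu(C(\Gamma_{n+1},e'))=\mu(\hat e')$ for every edge $e'$ in $\hat e'$ by the first step, the sum becomes
\[
\mu(\hat e)=\sum_{\hat e'\in\GE(\Gamma_{n+1})} M_n(\hat e,\hat e')\,\mu(\hat e'),
\]
which is the coordinate form of $\mu_n=M_n\mu_{n+1}$.

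The argument is essentially bookkeeping on top of Proposition~\ref{prop:Rips-partition}, so there is no substantial obstacle; the only subtle point to watch is to keep the orientation conventions consistent between the partition identity (where $e$ and its preimages $e'$ are oriented) and the definition of the incidence matrix, so that the combinatorial count of preimages really matches the matrix coefficient $M_n(\hat e,\hat e')$.
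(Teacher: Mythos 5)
Your proposal is correct and follows essentially the same route as the paper: apply Proposition~\ref{prop:Rips-partition} to a representative edge $e$ of each generalized edge, take $\mu$-measures of the resulting partition, and regroup the preimage edges $e'$ by the generalized edges of $\Gamma_{n+1}$ containing them, identifying the multiplicity with the coefficient $M_n(\hat e,\hat e')$. The extra care you take with well-definedness of $\mu(\hat e)$ and with orientation conventions is consistent with the remarks the paper makes just before the proposition.
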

\begin{proof}
  From Proposition~\ref{prop:Rips-partition}, we have
  \[
  \mu(\hat e)=\mu(e)=\sum_{e'}\mu(e')=\sum_{e'}\mu(\hat e'),
  \]
  where the sum is taken over all edges $e'$ such that
  $\tau(e')=e$. Grouping together the generalized edges of $\Gamma'$,
  proves the Proposition.
\end{proof}

We now use the fact that the tree is of Levitt type:

\begin{prop}\label{prop:rips-exhaustion}
  With the above notations, let $(v_n)_{n\in\N}$ be a sequence such
  that for each $n$, $v_n$ is a vertex of $\Gamma_n$ and
  $\tau_n(v_{n+1})=v_n$. Then the sequence $(\mu(C(\Gamma_n,v_n)))_{n\in\N}$ is
  non-increasing and converges to $0$.
\end{prop}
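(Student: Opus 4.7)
The plan is to reduce the statement to the no-atoms property of Proposition~\ref{prop:periodic-leaves} by exhibiting the cylinders $C(\Gamma_n, v_n)$ as a nested sequence whose intersection is finite.

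First I would establish monotonicity. Under the identification of $L(S_n)$ with $L(S_{n+1})$ provided by Proposition~\ref{prop:Rips-tau}, any bi-infinite admissible path $Z' \in C(\Gamma_{n+1}, v_{n+1})$ has image $\tau_n(Z')$ that passes through $\tau_n(v_{n+1}) = v_n$ at index $0$, so $C(\Gamma_{n+1}, v_{n+1}) \subseteq C(\Gamma_n, v_n)$. This immediately gives $\mu(C(\Gamma_{n+1}, v_{n+1})) \leq \mu(C(\Gamma_n, v_n))$.

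Next I would invoke the Levitt hypothesis. Since $T$ is of Levitt type and the action is free, Proposition~\ref{prop:rips-finite-intersections} tells us that $\bigcap_{n} C(\Gamma_n, v_n)$ is a \emph{finite} set of bi-infinite admissible paths (those $Z$ with $\CQ(Z)$ equal to the unique point of $v_\infty = \bigcap_n v_n$, of which there are finitely many because $\CQ$ is finite-to-one). Because $\mu$ is a Radon measure and $L(T) = L(S_A)$ is compact, $\mu(L(T)) < \infty$; the cylinders $C(\Gamma_n, v_n)$ are closed in $L(T)$, hence of finite $\mu$-measure, so continuity of finite measures from above yields
\[
\lim_{n\to\infty} \mu(C(\Gamma_n, v_n)) = \mu\Big(\bigcap_{n\in\N} C(\Gamma_n, v_n)\Big).
\]

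Finally, Proposition~\ref{prop:periodic-leaves} guarantees that $\mu$ has no atoms, so a finite set has $\mu$-measure zero, and the right-hand side is $0$. The only step with any content is checking that the nested intersection really is the finite set provided by Proposition~\ref{prop:rips-finite-intersections}, but this is built into that proposition via the correspondence $Z \mapsto \CQ(Z) \in v_\infty$; everything else is formal.
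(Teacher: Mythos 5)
Your proof is correct and follows essentially the same route as the paper's: nested cylinders, finiteness of the intersection via Proposition~\ref{prop:rips-finite-intersections}, and the no-atoms property from Proposition~\ref{prop:periodic-leaves}. You merely make explicit the continuity-from-above step (justified since $\mu$ is a finite measure on the compact lamination $L(S_A)$), which the paper leaves implicit.
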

\begin{proof}
  Recall from Section~\ref{sec:rips-lamination} that the cylinder
  $C(\Gamma_n,v_n)$ is the set of bi-infinite reduced admissible paths
  that goes through $v_n$ at index $0$. The cylinders
  $C(\Gamma_n,v_n)$ are nested and from
  Proposition~\ref{prop:rips-finite-intersections} their intersection
  $C(v_\infty)$ is finite. By Proposition~\ref{prop:periodic-leaves}
  the current $\mu$ has no atomes and $\mu(C(v_\infty))$ is null,
  which proves the Proposition.
\end{proof}

\subsection{The simplex of currents}

We are now ready to prove Theorem~\ref{thm:main} in the case where the
Rips induction completly decomposes the tree.

\begin{thm}\label{thm:main-levitt}
  Let $T$ be an $\R$-tree with a minimal free action of $\FN$ by
  isometries with dense orbits. Assume that $T$ is a tree of Levitt
  type.

  Then the simplex $\Curr(T)$ of currents carried by the dual
  lamination $L(T)$ has dimension at most $3N-4$ (and projective
  dimension at most $3N-5$).
\end{thm}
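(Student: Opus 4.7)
The plan is to linearly embed $\Curr(T)$ into the positive cone $C$ of the sequence of Rips-induction incidence matrices $(M_n)_n$ and then invoke Proposition~\ref{prop:matrices} to bound the dimension of $C$. Define
\[
\Phi\colon \Curr(T)\to \prod_n \R_{\geq 0}^{\GE(\Gamma_n)},\qquad \Phi(\mu)=(\mu_n)_n,
\]
where $\mu_n=(\mu(\hat e))_{\hat e\in \GE(\Gamma_n)}$. Each coordinate is linear in $\mu$, and Proposition~\ref{prop:rips-matrix} gives $\mu_n=M_n\mu_{n+1}$, so the image lies in $C$.

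The crux is the injectivity of $\Phi$. Consider the finite partitions $\mathcal{P}_n=\{C(\Gamma_n,v):v\in V(\Gamma_n)\}$ of $L(T)$, which refine as $n$ grows by Proposition~\ref{prop:Rips-tau}. Any nested intersection $\bigcap_n C(\Gamma_n,v_n)$ is finite by Proposition~\ref{prop:rips-finite-intersections}, and finite sets are $\mu$-null by Proposition~\ref{prop:periodic-leaves}. A standard monotone-class argument then shows that the $\sigma$-algebra generated by $\bigcup_n\mathcal{P}_n$ agrees with the Borel $\sigma$-algebra of $L(T)$ modulo $\mu$-null sets, so $\mu$ is determined by the vertex-cylinder measures $\mu(C(\Gamma_n,v))$. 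Since $\mu$ is constant on each generalized edge and $C(\Gamma_n,v)$ decomposes as the disjoint union of the cylinders of the outgoing edges at $v$, every such vertex measure is a linear combination of entries of $\mu_n$, so $\mu$ is recovered from $(\mu_n)_n$.

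For the dimension bound, the Euler-characteristic estimate on $\Gamma_n$ (connected, rank $N$, no vertex of valence $1$) gives $d_n:=|\GE(\Gamma_n)|\leq 3N-3$. Applying Proposition~\ref{prop:matrices}(2) yields projective dimension of $C$ at most $\liminf d_n-2\leq 3N-5$, translating to $\dim\Curr(T)\leq 3N-5$ and projective dimension $\leq 3N-6$. To verify the hypothesis $\|M_0\cdots M_{n-1}\|\to\infty$, I would combine the identity $\mu_0=(M_0\cdots M_{n-1})\mu_n$ for a fixed nonzero current $\mu\in\Curr(T)$ with Proposition~\ref{prop:rips-exhaustion}, which forces every entry of $\mu_n$ along a compatible edge sequence to tend to $0$; since $\mu_0$ is fixed and positive in some coordinate, the composite-matrix norms must diverge.

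The main obstacle I anticipate is the technical point that the Rips incidence matrices $M_n$ need not be of the elementary types $A_d^i$, $B_d^j$ required by Proposition~\ref{prop:matrices}; to apply that proposition cleanly one would decompose each Rips step into elementary sub-moves (each adding or deleting a single row or column) and work with the refined sequence of matrices instead of the composite sequence $(M_n)$.
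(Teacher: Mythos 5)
Your overall strategy (embed $\Curr(T)$ linearly into the positive cone of the incidence matrices and invoke Proposition~\ref{prop:matrices}) is the same as the paper's, but two steps as you have written them do not go through. The first and most serious is the injectivity of $\Phi$. Knowing that the atoms of $\bigvee_n\mathcal{P}_n$ are finite and $\mu$-null does \emph{not} imply that $\sigma\bigl(\bigcup_n\mathcal{P}_n\bigr)$ is the Borel $\sigma$-algebra mod $\mu$-null sets; there is no such ``standard monotone-class argument.'' Consider $X=\{0,1\}\times[0,1]$ with the refining partitions $\mathcal{P}_n=\{\{0,1\}\times I\}$ over dyadic intervals $I$ of level $n$: the atoms of the join are the two-point fibers $\{0,1\}\times\{t\}$, which are null for any product measure $\nu\times\lambda$ with $\lambda$ atomless, yet $\nu\times\lambda$ and $\nu'\times\lambda$ agree on every $\mathcal{P}_n$ while being distinct. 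In other words, finite null atoms only give that $\mu$ is determined on the sub-$\sigma$-algebra of saturated sets; to conclude you would need the fibers of $Z\mapsto(v_n(Z))_n$ to be essentially singletons, which is not what Proposition~\ref{prop:rips-finite-intersections} provides. The paper avoids this by working with the cylinders $C_A(w)$ that actually generate the topology on $\partial^2\FN$: it proves the explicit formula $\mu(w)=\lim_n\sum_{\hat e\in\GE(\Gamma_n)}\langle\hat e|w\rangle\,\mu(\hat e)$, where $\langle\hat e|w\rangle$ counts occurrences of $w$ in $\tau_0\circ\cdots\circ\tau_{n-1}(\hat e)$, and bounds the error by a constant times $\max_v\mu(C(\Gamma_n,v))$ over the (boundedly many) branch vertices $v$, which tends to $0$ by Proposition~\ref{prop:rips-exhaustion}. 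Some combinatorial counting of occurrences of $w$ crossing branch points is genuinely needed here; it cannot be replaced by soft measure theory.

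The second problem is the numerology. From $d_n\leq 3N-3$ and Proposition~\ref{prop:matrices}(2) you only get projective dimension of $C$ at most $(3N-3)-2=3N-5$, hence $\dim\Curr(T)\leq 3N-4$ and projective dimension $\leq 3N-5$; your sentence ``translating to $\dim\Curr(T)\leq 3N-5$ and projective dimension $\leq 3N-6$'' silently improves the bound by one. The paper closes this gap with an extra observation: Rips induction applied to a graph with the maximal number $3N-3$ of generalized edges (a trivalent graph) eventually produces a graph with strictly fewer, so $D=\liminf d_n\leq 3N-4$ and $D-2\leq 3N-6$. You need this (or an equivalent) step. Your remaining points are essentially sound: the decomposition of each $M_n$ into elementary matrices $A_d^i$, $B_d^j$ is exactly what the paper does, and your route to $\|M_0\cdots M_{n-1}\|\to\infty$ via a fixed nonzero current and Proposition~\ref{prop:rips-exhaustion} can be made to work (after upgrading the pointwise decay along compatible sequences to a uniform $\max_{\hat e}\mu(\hat e)\to 0$ via a K\"onig's lemma argument), though the paper argues more directly from the unbounded growth of the number of edges of $\Gamma_n$ against the bounded number of generalized edges.
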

\begin{proof}
  Let $A$ be a basis of $\FN$ and $S_0=S_A=(K_A,A)$ be the associated
  system of isometries. We perform the Rips induction to get a
  sequence of systems of isometries $S_n$ together with maps
  $\tau_n:\Gamma_{n+1}\to\Gamma_n$.

  For each $\mu\in\Curr(T)$, we consider the sequence of vectors
  $(\mu_n)_{n\in\N}$, where $\mu_n=(\mu_n(\hat e))_{\hat e\in 
    \GE(\Gamma_n)}$. We denote by $C=\{\ (\mu_n)_{n\in\N}\ |\
  \mu\in\Curr(T)\ \}$ the positive cone spanned by these vectors.  For
  each $n$, let $M_n$ be the incidence matrix of the map
  $\tau_n$. From Proposition~\ref{prop:rips-matrix}, $C$ is the cone
  associated to the sequence of matrices $(M_n)$ as in
  Section~\ref{sec:matrices}.

  The incidence matrices $M_n$ are products of matrices of the form
  $A_d^i$ and $B_d^{i,j}$ (see Section~\ref{sec:matrices}). Indeed,
  the matrix $A_d^i$ is that of splitting the $i$-th edge of
  $\Gamma_n$ into two edges and the matrix $B_d^{i,j}$ is that of
  replacing the reduced path made of the the $i$-th and $j$-th edges of
  $\Gamma_{n+1}$ when they form a generalized edge. Edges splitted
  in the Rips induction are incident to branch vertices of $\Gamma_n$
  and the number of resulting splitted edges is at most the valence of
  that branch vertex.  Moreover, these splittings creates as many
  vertices as the number of new edges. Thus, both the numbers of
  matrices $A_d^i$ and $B_d^j$ in each $M_n$ is bounded by
  $2N-2$. Because $T$ is of Levitt type, the Rips machine goes for
  ever and the number of edges of $\Gamma_n$ goes to infinity. 

  From Lemma~\ref{lem:matrices}, the projective dimension of
  $C$ is at most $D-2$ where $D$ is the inferior limit of the number
  of generalized edges in $\Gamma_n$. As $\Gamma_n$ is homotopic to
  the rose with $N$-petals, the number of generalized edges is bounded
  above by $3N-3$. The Rips induction applied to a graph $\Gamma_n$
  with $3N-3$ generalized edges eventually produces a graph with
  strictly less than $3N-3$ generalized edges. Thus we get that $D<
  3N-3$ and the projective dimension of $C$ is at most $3N-5$.

  Recall from Section~\ref{sec:currents} that for a finite reduced
  word $w$ in $A^{\pm 1}$ the cylinder $C_A(w)\subseteq\partial^2\FN$
  is the clopen set of bi-infinite reduced words in $A^{\pm 1}$ that
  reads $w$ at index $0$. The set of translates of all such cylinders
  is a sub-basis of open sets of the shift of bi-infinite reduced
  words in $A^{\pm 1}$. Thus a current is completely determined by the
  measures of these cylinders.

  Recall that a a generalized edge $\hat e$ in $\Gamma_n$ is mapped by
  $\tau_0\circ\cdots\tau_{n-1}$ to a finite reduced admissible path in
  $\Gamma_0$ and that $\Gamma_0$ is the rose with $N$ petals labelled
  by $A$.

  We denote by $\langle \hat e|w\rangle $ the number of
  occurences of the word $w$ in the reduced word
  $\tau_0\circ\cdots\tau_{n-1}(\hat e)$. We claim that for a current $\mu\in\Curr(T)$,
\[
\mu(w)=\lim_{n\to+\infty}\sum_{\hat e\in\GE(\Gamma_n)}\langle \hat e|w\rangle \mu(\hat e).
\]
This formula proves that $\mu$ is completely determined by the image
of $\mu$ in $C$ and thus proves the Proposition.  We now prove the above formula.

Using Proposition~\ref{prop:Rips-tau}, we get
\[
C(\Gamma_0,w)=\biguplus_{w'} C(\Gamma_n,w')
\]
where the disjoint union is taken over all reduced paths $w'$ in
$\Gamma_n$ with label $w$. Passing to the current we get
\[
\mu(w)=\sum_{w'}\mu(C(\Gamma_n,w')).
\]
If $w'$ is a subpath of a generalized edge $\hat e$ of $\Gamma_n$,
then all bi-infinite reduced paths through $w'$ goes through $\hat e$
and thus $\mu(C(\Gamma_n,w'))=\mu(\hat e)$. Taking into account only
these occurences of the label $w$ inside generalized edges, we get the
inequality
\[
\mu(w)\geq\sum_{\hat e\in\GE(\Gamma_n)}\langle \hat e|w\rangle \mu(\hat e).
\]
The difference between the two terms above is exactly the measure of
the cylinders of the occurences of the label $w$ that are not inside a
generalized edge. Those occurences cross a point of valence at least
$3$. For such a vertex $v$ in $\Gamma_n$ of valence at least $3$, we
denote by $\langle v|w\rangle $ the number of reduced paths in $\Gamma_n$ with label
$w$ that pass through $v$. We remark that $\langle v|w\rangle $ is bounded above by
$(|w|-1)(2N-1)^{|w|}$, where $|w|-1$ is the number of vertices
inside a path of length $|w|$ and $2N$ is the maximal valence of a
vertex in $\Gamma_n$. (Note that this bound is verly loose but
sufficient for our proof). We also use the notation
$\mu(v)=\mu(C(\Gamma_n,v))$. The difference of the two
terms in the above inequality is bounded by
\[
0\leq\mu(w)-\sum_{\hat e\in\GE(\Gamma_n)}\langle \hat e|w\rangle \mu(e)\leq
\sum_v\langle v,w\rangle \mu(v)\leq (|w|-1)(2N-1)^{|w|}\sum_v\mu(v)
\]
where $v$ ranges over all branch-points  of
$\Gamma_n$. Now $\Gamma_n$ is homotopy equivalent to $\Gamma_0$
and does not have vertices of valence $1$, thus the number of
branch-points in $\Gamma_n$ is bounded above by $2N-2$. We get
\[
0\leq\mu(w)-\sum_{\hat e\in\GE(\Gamma_n)}\langle\hat e|w\rangle\mu(e)\leq
(|w|-1)(2N-1)^{|w|}(2N-2)\max_v\mu(v)
\]
By Proposition~\ref{prop:rips-exhaustion}, the last factor
goes to $0$ when $n$ goes to infinity which concludes the proof.
\end{proof}

\section{Splitting induction}\label{sec:non-levitt}

There are trees in the boundary of Outer space where the Rips
induction is unuseful: the trees of surface type~\cite{ch-a}. For
these trees (and in general) we define another kind of induction,
which we call splitting induction. It is a little more involved to use
it for analysing the dual lamination as we are going to miss finitely
many leaves. But this gives us the same results for currents when they
have no atomes.

\subsection{Splitting}\label{sec:splitting}

This Section recalls the definitions and results of our previous work
with P.~Reynolds~\cite[Sections~4.2 and 4.3]{chr}.

Let $S=(F,A)$ be a reduced system of isometries.  A point $P\in
F$ is in the \textbf{interior} of a tree $K\subseteq F$ if $P$ is in
the interior of a segment contained in $K$. A point $P$ is
\textbf{extremal} in a tree $K$ if it is not in the interior.

Let $x$ be an interior point of a connected component $K_x$ of
$F$. Let $\pi_0(K_x\ssm\{x\})=L\biguplus R$ be a partition of the set
of directions at $x$ in two non-empty subsets.  Then $(x,L,R)$ is a
\textbf{splitting partition} of the system of isometries $S=(F,A)$ if
\begin{enumerate}
\item there exists exactly one partial isometry $a_0\in A^{\pm 1}$ defined
at $x$ whose domain $\dom(a_0)$ meets both sets of directions $L$ and
$R$ and,
\item for each direction $d\in L\cup R$ at $x$, there is at least
  another partial isometry $a\in A^{\pm 1}\ssm\{a_0\}$ defined at $d$:
  $x\in\dom(a)$ and $\dom(a)\cap d\neq\emptyset$.
\end{enumerate}

For a splitting partition $(x,L,R)$, let $F'$ be the forest 
obtained by splitting $K_x$ into two disjoint compact trees $\bar L$ and
$\bar R$:
\[
F'=(F\ssm K_x) \biguplus (L\cup\{x\})\biguplus (R\cup\{x\}).
\]
We insist that there are two distinct copies of $x$ in $F'$, which we
denote by $x_L$ and $x_R$.  

We also split the partial isometries in $A$. The partial isometry
$a_0$ is replaced by its two restrictions: $a_0^L=
{_{L\cup\{x_L\}\rceil}a_0}$ and $a_0^R=
{_{R\cup\{x_R\}\rceil}{a_0}}$. Let $a\in A^{\pm 1}$ be a partial
isometry distinct from $a_0$ and $a_0\inv$. If $\dom(a)$ is not
reduced to a point, we let $a'$ be the closure of the restriction (at
the source and the target) of $a$ to $F\ssm\{x\}$. For instance, if
$x$ is in the domain of $a$ then by definition of a splitting point,
$x$ is extremal in $\dom(a)$ and $\dom(a)$ meets either $L$ or
$R$, say $L$. In this case $\dom(a')=\{x_L\}\cup(\dom(a)\cap L)$.
Finally, if $\dom(a)$ is a singleton $\{y\}$ and $y.a=y'$ we define
$a'$ arbitrarily by letting
\[
\begin{cases}
y.a'=y'\text{ if }x\not\in\{y,y'\}\\
x_L.a'=x_L\text{ if }y=y'=x\\
x_L.a'=y'\text{ if }y=x\text{ and }y'\neq x\\
y.a'=x_L\text{ if }y\neq x\text{ and }y'=x.
\end{cases}
\]

Considering the graphs $\Gamma$ and $\Gamma'$ of the systems of
isometries $S$ and $S'$, as above there is a natural map
$\tau:\Gamma'\to\Gamma$ which is one-to-one, except that it maps both
vertices $\bar L$ and $\bar R$ of $\Gamma'$ to the vertex $K_x$ of
$\Gamma$ and, except that it maps both edges $a_0^L$ and $a_0^R$ of
$\Gamma'$ to the edge $a_0$ of $\Gamma$. Indeed the map $\tau$ is a
folding.

The splitting induction affects the admissible paths that contain the
subpaths ${a_0^R}\inv a_0^L$ or ${a_0^L}\inv a_0^R$. We thus restrict
to a sublamination. A finite admissible path $u$ in $\Gamma$ is
\textbf{regular} if $\dom(u)$ contains strictly more than one
point. The \textbf{regular lamination} $L'(S)$ of a system of
isometries $S$ is the set of bi-infinite admissible paths $Z$ in
$\Gamma$ such that all finite subpaths $u$ of $Z$ are regular. This
regular lamination is also the derived set $L'(S)$ of non-isolated
leaves in $L(S)$:

\begin{prop}[\cite{chr}]\label{prop:regular-derived}
  Let $T$ be an $\R$-tree with a free minimal action of $\FN$ by
  isometries with dense orbits. Let $A$ be a basis of $\FN$ and
  $S_A=(K_A,A)$ be the associated system of isometries. Recall that
  the dual lamination $L(T)$ of $T$ is equal to the admissible
  lamination of $S_A$.

  Then, the regular lamination $L'(S_A)$ is the derived space of
  $L(S_A)$. \qed   
\end{prop}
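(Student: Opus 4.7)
The proof splits into two inclusions.

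For the inclusion $L(S_A) \ssm L'(S_A) \subseteq$ (isolated points of $L(S_A)$): suppose $Z \in L(S_A)$ has a finite subpath $u$ with $\dom(u)=\{P\}$ a single point. After applying a suitable shift we may assume $u$ is a prefix of $Z_+$, so $Z \in C(\Gamma,u)$. Any $W \in C(\Gamma,u)$ satisfies $\CQ(W_+) \in \dom(u)=\{P\}$, hence $\CQ(W_+)=P$, and by the dual lamination condition $\CQ(W_-)=\CQ(W_+)=P$. Since the action of $\FN$ on $T$ is free, $\CQ^{-1}(P)$ is finite by Corollary~5.4 of \cite{ch-a}, and so $C(\Gamma,u)$ is finite. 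Taking a window $Z_{[-N,N]}$ large enough to distinguish $Z$ from the finitely many other elements of $C(\Gamma,u)$ then yields a cylinder neighborhood of $Z$ reducing to $\{Z\}$, so $Z$ is isolated in $L(S_A)$.

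For the reverse inclusion $L'(S_A) \subseteq$ (non-isolated points of $L(S_A)$): take $Z \in L'(S_A)$, so every finite subpath $u_n = Z_{[-n,n]}$ has non-degenerate $\dom(u_n)$. It suffices to produce, for each $n$, an element $W_n \in L(S_A) \ssm \{Z\}$ agreeing with $Z$ on $[-n,n]$; the sequence $W_n$ then converges to $Z$ in the shift topology, showing non-isolation. After a shift, this reduces to the key claim that whenever $\dom(u) \neq \{\text{pt}\}$ the cylinder $C(\Gamma,u)$ contains more than one bi-infinite admissible path. The proof of this claim, carried out in \cite[Section 4]{chr}, proceeds by picking $P' \in \dom(u)$ distinct from $\CQ(Z_+)$ and invoking property~(3) of reduced systems to obtain admissible infinite extensions of the positive and negative halves of $u$ whose $\CQ$-images equal $P'$, thereby producing a bi-infinite admissible $W \neq Z$ in $C(\Gamma,u)$.

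The main obstacle lies in the second direction: the constructed extensions must be simultaneously admissible and reduced at the junction with $u$, which is not automatic from property~(3) alone since a prescribed admissible path reaching $u \cdot P'$ may begin with the inverse of the last letter of $u$. This is handled by the finer structure of reduced systems of isometries; the density of branching in the compact heart — a consequence of the dense-orbits hypothesis on $T$ together with freeness — provides multiple admissible continuations at every sufficiently interior point of $\dom(u)$, from which a non-cancelling choice can always be extracted.
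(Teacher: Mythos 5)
Your overall skeleton is the right one, and you should first be aware that the paper itself offers no argument here: the statement is quoted from \cite{chr} with a \qed, so there is nothing internal to compare against. Your first inclusion is correct and complete. If some subpath $u$ of $Z$ has $\dom(u)=\{P\}$, then every $W\in C(\Gamma,u)$ has $\dom(W_+)\subseteq\dom(u)$, hence $\CQ(W_+)=\CQ(W_-)=P$; freeness makes $\CQ$ finite-to-one (\cite[Corollary~5.4]{ch-a}, the same ingredient the paper invokes for Proposition~\ref{prop:rips-finite-intersections}), so the cylinder $C(\Gamma,u)$ is finite and a sufficiently long window isolates $Z$. This is clean.

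The second inclusion contains a genuine gap. You correctly reduce non-isolation of a regular leaf to the claim that $C(\Gamma,u)$ has more than one element whenever $\dom(u)$ is nondegenerate, but you then do two things that do not constitute a proof: you defer the claim to \cite[Section~4]{chr}, and you resolve the cancellation problem you yourself identify by asserting that branch points are dense in the compact heart as a consequence of dense orbits and freeness. That assertion is false in general: for trees of surface type the compact heart $K_A$ is a finite tree, and in the extreme case of a tree transverse to an interval exchange it is an arc with no interior branch points at all, yet the proposition still holds for such trees. So the mechanism you propose cannot be the one doing the work. What is actually needed is a construction which, given a second point $P'\in\dom(u)$, produces admissible rays extending \emph{both} halves of $u$, converging to $P'$, and not beginning with the inverses of the corresponding extremal letters of $u$; property~(3) of reducedness only guarantees \emph{some} admissible ray with $\CQ$-image $P'$, with no control over its first letter relative to $u$, and property~(2) of surface type only gives a second partial isometry defined at a point, not an infinite admissible continuation through it. Closing this gap is precisely the content of the cited lemma of \cite{chr}; as written, your argument replaces it with a false statement rather than a proof.
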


We summarize our previous work with P.~Reynolds in the following
Proposition:

\begin{prop}[{\cite[Lemma~4.8 and 4.9]{chr}}]\label{prop:split-tau}
  Let $S$ be a reduced system of isometries and let $S'$ be a system
  of isometries obtained by splitting. Then $S'$ is reduced and the map
  $\tau:\Gamma'\to\Gamma$ is a homotopy equivalence.

  For any bi-infinite reduced regular path $Z'\in L(S')$ in $\Gamma'$,
  the bi-infinite path $\tau(Z')$ is reduced, admissible and
  regular. For any bi-infinite reduced regular path $Z\in L(S)$ in
  $\Gamma$, there exists a unique bi-infinite reduced regular path
  $Z'\in L(S')$ such that $\tau(Z')=Z$.

  Through the map $\tau$, we identify the regular laminations:
\[
L'(S)=L'(S').
\]
\end{prop}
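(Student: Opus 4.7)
The plan is to verify in turn the four conditions defining a reduced system of isometries for $S'$, then to check that $\tau$ is a homotopy equivalence, and finally to establish the bijection between regular laminations.

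First I would check that $S' = (F', A')$ is reduced. Connectedness of $\Gamma'$ is immediate: the two new vertices $\bar L$ and $\bar R$ are joined by the edge $a_0^L(a_0^R)^{-1}$ (up to orientation), so $\Gamma'$ is obtained from $\Gamma$ by an elementary subdivision at a single vertex. Independent generators for $S'$ follow from those of $S$ since every infinite admissible path in $\Gamma'$ projects via $\tau$ to an infinite admissible path in $\Gamma$ with the same domain. For the third condition, given $P \in F'$, either $P$ lies in a component of $F'$ untouched by the splitting (apply condition (3) for $S$ directly) or $P \in \bar L \cup \bar R$, in which case the two defining conditions of a splitting partition guarantee that an admissible infinite reduced path can be built starting with an initial $a \neq a_0$ that witnesses the direction. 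The fourth condition (extremal points of domains lie in $F'$) is the delicate one: for the new partial isometries $a_0^L$ and $a_0^R$ one checks that their new extremal points are precisely the split copies $x_L$ and $x_R$, which by construction of the splitting partition lie in the domains of two distinct partial isometries; for the remaining $a'$, any new extremal point is either an old extremal point of $a$, or a split copy of $x$, and in the latter case condition (2) of the splitting partition again guarantees it lies in $F'$.

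Next, $\tau \colon \Gamma' \to \Gamma$ is a homotopy equivalence because it is precisely a fold: it identifies the two edges $a_0^L$ and $a_0^R$ (joining the two adjacent vertices $\bar L$ and $\bar R$) with the single edge $a_0$, collapsing the interior of the resulting bigon. Equivalently, $\tau$ is the inverse of an elementary subdivision followed by adding a free edge-pair relation, which is a standard homotopy equivalence of graphs.

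For the lamination part, I would set up the bijection by describing the lift. Given a bi-infinite reduced regular path $Z \in L(S)$, define $Z'$ by lifting each edge of $Z$ to $\Gamma'$ along $\tau$; the only ambiguity occurs at occurrences of the edge $a_0^{\pm 1}$ in $Z$, where one must decide between $a_0^L$ and $a_0^R$. This choice is forced by reading the neighbouring edges: the domain of the finite subpath surrounding such an occurrence is non-degenerate by regularity, hence is contained in one of $L \cup \{x\}$ or $R \cup \{x\}$ (not just $\{x\}$), and this determines the side uniquely. The resulting $Z'$ is reduced because the only potential cancellations would be subwords ${a_0^L}^{-1} a_0^R$ or ${a_0^R}^{-1} a_0^L$, whose domain is exactly $\{x\}$, contradicting regularity of $Z$; it is admissible and regular since $\tau$ preserves domains on the subpaths that do not involve the splitting. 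Conversely, given $Z' \in L(S')$ bi-infinite reduced regular, $\tau(Z')$ is admissible and regular by functoriality of the domain, and reduced because the only cancellations introduced by $\tau$ would come from subwords ${a_0^L}^{-1} a_0^L$ or ${a_0^R}^{-1} a_0^R$, which are already non-reduced in $\Gamma'$. These two constructions are mutually inverse, proving the identification $L'(S) = L'(S')$.

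The main obstacle is the careful case analysis at the splitting vertex, especially the verification that the reducedness property of $S'$ holds at the freshly created extremal points $x_L$ and $x_R$ of the new partial isometries $a_0^L$ and $a_0^R$; this is where conditions (1) and (2) of a splitting partition are used in an essential way, and where one must treat separately the generic case and the degenerate case where some $\dom(a)$ is a singleton.
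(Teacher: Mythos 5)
First, a point of comparison: the paper does not actually prove this Proposition. It is stated as a summary of the authors' earlier work with Reynolds, and the proof is the content of Lemmas~4.8 and 4.9 of \cite{chr}. So your reconstruction cannot be matched line by line against anything in this paper; what can be said is that your outline (verify the four reducedness conditions, recognize $\tau$ as a fold, lift leaves edge by edge with the ambiguity at $a_0$ resolved by regularity of the domains) is the expected one and is essentially what the cited reference carries out in detail.

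That said, three steps are wrong or incomplete as written. (i) The edges $a_0^L$ and $a_0^R$ have \emph{distinct} initial vertices $\bar L\neq\bar R$ and (generically) a common terminal vertex, namely the component containing $\im(a_0)$; they do not bound a bigon, and $\Gamma'\to\Gamma$ is not the inverse of an elementary subdivision. The correct statement is that $\tau$ is a Stallings fold of two edges with distinct free endpoints, which is a homotopy equivalence for that reason; the degenerate case $\im(a_0)\subseteq K_x$, where $a_0$ is a loop of $\Gamma$ at $K_x$, needs separate treatment. (ii) In the direction $Z'\mapsto\tau(Z')$ you argue reducedness by excluding ${a_0^L}^{-1}a_0^L$ and ${a_0^R}^{-1}a_0^R$; those words are already non-reduced in $\Gamma'$, so excluding them proves nothing. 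The subwords that are reduced in $\Gamma'$ yet map to back-tracks in $\Gamma$ are the mixed ones, with one occurrence of $a_0^L$ and one of $a_0^R$ in cancelling orientations; their domain is the singleton split point, and it is \emph{regularity} of $Z'$ that rules them out. You invoke exactly this mechanism in the forward direction, so the repair is available to you, but the converse as written does not establish that $\tau(Z')$ is reduced. (iii) The same oversight affects the independent-generators step: an infinite admissible path in $\Gamma'$ containing such a mixed subword does not project to a reduced admissible path in $\Gamma$, so the claim that every infinite admissible path of $\Gamma'$ pushes down with the same domain is false as stated; one must first note that such a path already has singleton domain, and apply the projection argument only to the remaining paths. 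With these corrections your argument goes through and agrees in substance with the proof in \cite{chr}.
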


We now write the effect of splitting on the cylinders of the
lamination of $S$.  As in Section~\ref{sec:levitt-case}, for a finite
reduced path $u$ of $\Gamma$ ($u$ can be a vertex or an edge), the
cylinder $C(\Gamma,u)$ is the set of bi-infinite admissible paths that
goes through $u$ at index $0$. But here we rather consider the regular
cylinder $C'(\Gamma,u)$: the set of bi-infinite reduced admissible
regular paths that goes through $u$ at index $0$. Exactly as in
Proposition~\ref{prop:Rips-partition} we have:

\begin{prop}\label{prop:split-partition}
  Let $S=(F,A)$ be a reduced system of isometries with graph $\Gamma$.
  Let $S'$ be obtained by splitting induction from $S$. Let $\Gamma'$
  be the graph of $S'$ and $\tau:\Gamma'\to\Gamma$ be the graph map.
  For each edge $e$ of $\Gamma$:
\[
C'(\Gamma,e)=\biguplus_{e'}C'(\Gamma',e')
\]
where the disjoint union is taken over all edges $e'$ of $\Gamma'$ such that
$\tau(e')=e$. More generally, for every finite regular path $u$ in
$\Gamma$
\[
C'(\Gamma,u)=\biguplus_{u'}C'(\Gamma',u')
\]
where the disjoint union is taken over all finite reduced paths $u'$
of $\Gamma'$ such that $\tau(u')=u$.\qed
\end{prop}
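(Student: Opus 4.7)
The plan is to mirror the proof of Proposition~\ref{prop:Rips-partition}, replacing the role of Proposition~\ref{prop:Rips-tau} by its splitting counterpart Proposition~\ref{prop:split-tau}. Since Proposition~\ref{prop:split-tau} already provides a canonical bijection $L'(S) = L'(S')$ induced by $\tau$, the argument reduces to verifying that this bijection correctly redistributes the regular cylinders, and that the edge case is just the specialization to $|u|=1$.

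Concretely, given a regular bi-infinite admissible path $Z \in C'(\Gamma, u)$, Proposition~\ref{prop:split-tau} furnishes a unique regular bi-infinite admissible path $Z' \in L'(S')$ with $\tau(Z') = Z$. Letting $u'$ be the subpath of $Z'$ at indices $0,\dots,|u|-1$, we have $\tau(u') = u$ and $Z' \in C'(\Gamma', u')$, giving the inclusion $C'(\Gamma,u) \subseteq \bigcup_{u'} C'(\Gamma', u')$. Conversely, for any reduced path $u'$ in $\Gamma'$ with $\tau(u')=u$ and any $Z' \in C'(\Gamma', u')$, the projection $\tau(Z')$ is a regular admissible bi-infinite path passing through $u$ at index $0$, hence lies in $C'(\Gamma,u)$. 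Disjointness is immediate, since a single $Z' \in L'(S')$ uniquely determines its own length-$|u|$ subpath at index $0$, so two distinct lifts $u'_1 \neq u'_2$ cannot share a common $Z'$.

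The one place requiring care is the interplay between folding and reducedness. The map $\tau$ folds $a_0^L$ and $a_0^R$ together onto $a_0$, so in principle a reduced path in $\Gamma'$ could project to a non-reduced path in $\Gamma$ containing a cancellation $a_0 a_0^{-1}$. Such a cancellation would force $u'$ to contain either $(a_0^L)^{-1} a_0^R$ or $(a_0^R)^{-1} a_0^L$; but by the definition of splitting recalled in Section~\ref{sec:splitting}, these are exactly the subpaths excluded from the regular lamination, so they cannot appear inside a regular admissible path $Z' \in L'(S')$. This is the main obstacle, and it is dispatched precisely by the restriction to regular cylinders; once it is checked, the cylinder-decomposition argument proceeds verbatim as in the Rips case.
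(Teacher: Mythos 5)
Your proof is correct and follows exactly the route the paper intends: it omits the argument entirely (stating the proposition with a \qed after the phrase ``Exactly as in Proposition~\ref{prop:Rips-partition}''), and your write-up is precisely that adaptation, substituting Proposition~\ref{prop:split-tau} for Proposition~\ref{prop:Rips-tau} and restricting to regular cylinders. Your closing observation --- that the only threat to reducedness under the folding $\tau$ comes from the subpaths ${a_0^L}^{-1}a_0^R$ and ${a_0^R}^{-1}a_0^L$, which are exactly those excluded by passing to the regular lamination --- correctly identifies the one point where the splitting case genuinely differs from the Rips case.
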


\subsection{Existence of splittings}\label{sec:indec-surface}

To use the splitting induction to analyse laminations and currents we
first need to prove that we can perform it: there exists splitting
partitions. From our previous work with P.~Reynolds~\cite{chr} we know
that this is the case, at least when we cannot perform Rips
induction. A system of isometries $S=(F,A)$ is of \textbf{surface
  type} if any point $x$ of $F$ belongs to the domains of at least two
partial isometries $a\neq b\in A^{\pm 1}$ (equivalently Rips induction
does nothing to $S$).

\begin{prop}[{\cite[Proposition~4.11]{chr}}]\label{prop:split-exist}
  Let $S=(F,A)$ be a reduced system of isometries of surface type,
  then there exists a splitting partition for $S$. \qed
\end{prop}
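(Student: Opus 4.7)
The plan is to exhibit a splitting partition explicitly by finding an interior point $x$ of $F$ together with a partition of the directions at $x$ satisfying both conditions of a splitting partition. The starting observation is that the finite set $E \subseteq F$ consisting of the extremal points of $\dom(a)$ for $a \in A^{\pm 1}$, together with the branch points and extremal points of $F$, decomposes $F$ into finitely many open arcs. On each such arc, the set of partial isometries transverse at interior points (those $a$ with the arc lying in the interior of $\dom(a)$) is locally constant; by surface type, the total number of partial isometries defined at any point of $F$ is at least two.

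Next I would locate a point $x$ at which exactly one partial isometry $a_0$ is transverse, so that condition (1) becomes automatic, since extremal isometries at $x$ can meet only one direction. At a generic interior point of an arc, every defined isometry is transverse, which typically yields at least two transverse isometries and violates condition (1). To reduce the transverse count to one, $x$ must be an extremal point of $\dom(a)$ for some $a \neq a_0$. Such candidates exist: start from an extremal point $P$ of some $\dom(b)$ interior to a component $K_P$ of $F$, and use the reduced hypothesis (condition~(4), which forces $P \in F'$) combined with surface type to pass, if necessary, to a nearby point with the desired local configuration.

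With $x$ and $a_0$ fixed, I would define the partition $(L,R)$ of directions at $x$ so that $a_0$ is the unique isometry whose domain meets both sides (condition~(1)), and so that every direction of $L \cup R$ is covered by an additional extremal isometry (condition~(2)). When $x$ has valence two in $F$ the partition is forced, and one verifies directly that the extremal isometries at $x$ hit both directions; at a branch point of higher valence, one exploits the freedom of choosing $L$ and $R$ to match the directions covered by extremal isometries.

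The main obstacle lies in the existence step: in the non-branch case, condition~(2) effectively demands at least three partial isometries at $x$ — one transverse and one extremal on each side — whereas surface type alone guarantees only two. Producing the extra isometry requires a careful combinatorial argument combining reducedness with surface type to rule out pathological configurations, and it is precisely this case analysis that is carried out in the cited \cite[Proposition~4.11]{chr}.
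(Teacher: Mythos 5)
There is a genuine gap, and you name it yourself. Your outline correctly identifies the shape of the problem: at a point interior to $\dom(a)$ the isometry $a$ is ``transverse'' (its domain meets both sides of $x$), so condition~(1) forces $x$ to be an extremal point of $\dom(a)$ for every defined isometry except the single $a_0$; and at a valence-two point condition~(2) then demands two further isometries, one covering each side, so that at least \emph{three} partial isometries must be defined at $x$ while surface type only guarantees two. But having isolated this as ``the main obstacle,'' you do not resolve it --- you state that the required case analysis ``is carried out in the cited \cite[Proposition~4.11]{chr}.'' Deferring the decisive step to the very result being proved means no proof has been given. Note also that under the surface-type hypothesis condition~(4) of reducedness is vacuous (surface type already says every point lies in at least two domains, i.e.\ $F'=F$), so the appeal to it in your existence step buys nothing; the actual argument in \cite{chr} has to use more of the structure of a reduced system (independent generators and the finiteness/index constraints on singular points) to produce a point with one transverse and sufficiently many extremal isometries, and that is precisely the content you have omitted.

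For what it is worth, the paper itself offers no proof either: the proposition is stated with a \verb|\qed| and imported wholesale from \cite[Proposition~4.11]{chr}, so there is no internal argument to compare yours against. As a citation your text would be fine; as a blind proof attempt it is an annotated restatement of the difficulty rather than a solution.
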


\section{Unfolding}\label{sec:indecomposable}

\subsection{Unfolding induction and currents}

Let $T$ be an $\R$-tree with a free minimal action of $\FN$ by
isometries with dense orbits. Let $L(T)$ be the dual lamination of
$T$. By Proposition~\ref{prop:periodic-leaves}, no leaf of $L(T)$ is
periodic and, any current $\mu\in\Curr(T)$ carried by $L(T)$ has
no  atomes. By definition of the derived space we get:

\begin{prop}\label{sec:current-derived}
  Let $T$ be an $\R$-tree with a free minimal action of $\FN$ by
  isometries with dense orbits. Then every current $\mu\in\Curr(T)$
  carried by the dual lamination $L(T)$ is carried by the regular
  lamination $L'(T)$. \qed
\end{prop}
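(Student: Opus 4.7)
The plan is to reduce the statement to an atom-counting argument using Proposition~\ref{prop:regular-derived} together with Proposition~\ref{prop:periodic-leaves}. By Proposition~\ref{prop:regular-derived}, the regular lamination $L'(T)$ coincides with the derived set of $L(T)$, so the difference $L(T) \setminus L'(T)$ is exactly the set of points of $L(T)$ which are isolated in $L(T)$ (for the subspace topology inherited from $\partial^2\FN$). It therefore suffices to prove that $\mu(L(T) \setminus L'(T)) = 0$, since $L'(T)$ is a closed subset of $\partial^2\FN$ and $\mu$ is already carried by $L(T)$.

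First I would observe that $\partial^2\FN$, being an open subset of the square of a Cantor set, is second countable. Consequently its subspace $L(T)$ is second countable, and a standard argument shows that the set of isolated points of any second countable space is at most countable: each isolated point $Z$ of $L(T)$ can be separated by a basic open set $W \subseteq \partial^2\FN$ with $W \cap L(T) = \{Z\}$, and distinct isolated points require distinct basic open sets drawn from a fixed countable basis. Hence $L(T) \setminus L'(T)$ is countable.

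Next, for any such isolated point $Z$, Proposition~\ref{prop:periodic-leaves} guarantees that $\mu(\{Z\}) = 0$, since $\mu$ has no atoms. Summing over the countably many isolated points via countable additivity gives $\mu(L(T) \setminus L'(T)) = 0$, whence the support of $\mu$ is contained in $L'(T)$, as desired. There is no real obstacle here: the only nontrivial input is the absence of atoms, which is exactly Proposition~\ref{prop:periodic-leaves}; the remaining ingredients are topological generalities about second countable spaces together with the identification of $L'(T)$ with the derived set of $L(T)$ already recorded in Proposition~\ref{prop:regular-derived}.
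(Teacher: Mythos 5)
Your argument is correct and matches the paper's (very terse) reasoning exactly: the paper also derives the statement from the absence of atoms (Proposition~\ref{prop:periodic-leaves}) combined with the identification of $L'(T)$ with the derived set of $L(T)$ (Proposition~\ref{prop:regular-derived}), leaving the rest to ``by definition of the derived space.'' Your write-up merely supplies the routine details the paper omits --- countability of the isolated points via second countability and countable additivity --- so there is nothing to correct.
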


Let $A$ be a basis of $\FN$ and $S_A=(K_A,A)$ the associated system of
isometries.  Let $S_n=(F_n,A_n)$ be a sequence of systems of
isometries obtained from $S_0=S_A$ by \textbf{unfolding induction}: at
each step $n$ either Rips or splitting induction is performed. Let
$\tau_n:\Gamma_{n+1}\to\Gamma_{n}$ be the map between the graphs. As
$S_0$ is reduced, $\tau_n$ is a homotopy equivalence, $\Gamma_n$ is
connected and does not have vertices of valence $1$. The graph
$\Gamma_n$ has at most $3N-3$ generalized edges in $\GE(\Gamma_n)$.

For each generalized edge $\hat e'\in\GE(\Gamma_{n+1})$ and each edge
$e$ in $\Gamma_n$ we consider $\langle \hat e',e\rangle$ the number of
occurences of $e$ (without taking into account orientation) in the
finite path $\tau(\hat e)$. The \textbf{incidence matrix} $M_n$ of the
map $\tau_n$ has entry $\langle \hat e',e\rangle$ for each pair $(\hat
e,\hat e')\in\GE(\Gamma_{n})\times\GE(\Gamma_{n+1})$. Again, we remark
that the incidence matrix depends on the choice of the edge $e$ in the
generalized edge $\hat e$.

For a generalized edge $\hat e$ of $\Gamma_n$ we
consider the non-negative number $\mu(C'(\Gamma_n,\hat e))$, where
$C'(\Gamma_n,\hat e)$ is the cylinder of regular bi-infinite admissible
paths in $\Gamma_n$ that passes through $\hat e$ at index $0$. We consider
the non-negative vector $\mu_n=(\mu(C'(\Gamma_n,\hat e)))_{\hat
  e\in\GE(\Gamma_n)}$.

\begin{prop}\label{prop:split-matrix}
  Let $T$ be an $\R$-tree with a free action of $\FN$ by isometries
  with dense orbits. Let $A$ be a basis of $\FN$ and $S_0=S_A=(K_A,A)$
  be the associated system of isometries. Let $(S_n)_{n\in\N}$ be a
  sequence of systems of isometries obtained from $S_0$ by unfolding
  induction. Let $\Gamma_n$ be the graph of $S_n$ and $\GE(\Gamma_n)$
  its set of generalized edge and, let $M_n$ be the incidence matrix
  of $\tau_n:\Gamma_{n+1}\to\Gamma_{n}$.

  Then, for each current $\mu$ carried by $L(T)$
\[
\mu_n=M_n\mu_{n+1}
\]
where $\mu_n=(\mu(C'(\Gamma_n,\hat e)))_{\hat e\in\GE(\Gamma_n)}$ is
the non-negative vector associated to $\mu$ at step $n$.
\end{prop}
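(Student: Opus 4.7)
The plan is to mimic the proof of Proposition~\ref{prop:rips-matrix}, generalizing from the Rips step to an arbitrary unfolding step. The only novelty is that a splitting step only preserves the regular part of the lamination, so one must first get rid of the irregular leaves. This is handled by Proposition~\ref{sec:current-derived}: since the action of $\FN$ on $T$ is free with dense orbits, any $\mu\in\Curr(T)$ is carried by the regular lamination $L'(T)$. Consequently, for every finite admissible path $u$ in $\Gamma_n$,
\[
\mu(C(\Gamma_n,u))=\mu(C'(\Gamma_n,u)),
\]
and we may work exclusively with regular cylinders throughout.

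Next, I would fix $n$ and treat the two possible cases for $\tau_n$ uniformly. If $S_{n+1}$ is obtained from $S_n$ by Rips induction, Proposition~\ref{prop:Rips-partition} yields, for every edge $e$ of $\Gamma_n$,
\[
C'(\Gamma_n,e)=\biguplus_{e':\,\tau_n(e')=e}C'(\Gamma_{n+1},e'),
\]
once we restrict Proposition~\ref{prop:Rips-partition} to regular cylinders (using the first step above). If $S_{n+1}$ is obtained by splitting, the same identity holds by Proposition~\ref{prop:split-partition}. In either case, passing to $\mu$-measures gives
\[
\mu(C'(\Gamma_n,e))=\sum_{e':\,\tau_n(e')=e}\mu(C'(\Gamma_{n+1},e')).
\]

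Finally, I would group the edges $e'$ on the right-hand side according to the generalized edge of $\Gamma_{n+1}$ to which they belong. If $e$ lies inside the generalized edge $\hat e$ of $\Gamma_n$ then any bi-infinite reduced path crossing $e$ also crosses every other edge of $\hat e$, so $\mu(C'(\Gamma_n,e))=\mu(C'(\Gamma_n,\hat e))$; likewise $\mu(C'(\Gamma_{n+1},e'))=\mu(C'(\Gamma_{n+1},\hat e'))$ for any edge $e'$ of a generalized edge $\hat e'$. Moreover, the number of edges $e'$ of the generalized edge $\hat e'$ that satisfy $\tau_n(e')=e$ equals the number of occurrences of $e$ in the reduced path $\tau_n(\hat e')$, which is precisely the incidence coefficient $\langle \hat e',e\rangle$ of the matrix $M_n$. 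Collecting these equalities gives
\[
\mu(C'(\Gamma_n,\hat e))=\sum_{\hat e'\in\GE(\Gamma_{n+1})}\langle\hat e',e\rangle\,\mu(C'(\Gamma_{n+1},\hat e')),
\]
which is exactly the coordinate-wise version of $\mu_n=M_n\mu_{n+1}$.

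I do not expect any real obstacle: once Proposition~\ref{sec:current-derived} reduces everything to the regular lamination, Propositions~\ref{prop:Rips-partition} and \ref{prop:split-partition} give the needed partitions in both cases, and the rest is bookkeeping. The only mild care is in making sure the choice of representative edge $e$ inside $\hat e$ used to define $M_n$ is the same as the one used on the left-hand side here, which is tautological.
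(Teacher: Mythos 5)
Your proof is correct and follows essentially the same route as the paper: reduce to regular cylinders via Proposition~\ref{sec:current-derived}, apply Proposition~\ref{prop:Rips-partition} or Proposition~\ref{prop:split-partition} according to which induction step is performed, and then group edges into generalized edges to recover the incidence matrix. The paper's own proof is just a one-line reference back to Proposition~\ref{prop:rips-matrix} with Proposition~\ref{prop:split-partition} substituted in; you have merely written out the same bookkeeping in full.
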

\begin{proof}
  The proof is the same as the proof of
  Proposition~\ref{prop:rips-matrix}, using
  Proposition~\ref{prop:split-partition} if we use splitting induction
  at step $n$.
\end{proof}

We can now prove Theorem~\ref{thm:main} when the induction completly
analyses the lamination.

\begin{thm}\label{thm:main-under-complete}
  Let $T$ be an $\R$-tree with a minimal, free, action of $\FN$ by
  isometries with dense orbits. Let $A$ be a basis of $\FN$ and let
  $S_A=(K_A,A)$ be the associated system of isometries. Let
  $S_n=(F_n,A_n)$ be a sequence of systems of isometries obtained from
  $S_0=S_A$ by unfolding induction.

  Assume that each nested intersection of connected components $v_n$
  of $F_n$ is a singleton.

  Then, the simplex of currents $\Curr(T)$ carried by the dual
  lamination $L(T)$ has dimension at most $3N-4$ (and projective
  dimension at most $3N-5$).
\end{thm}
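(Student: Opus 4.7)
The plan is to mimic the proof of Theorem~\ref{thm:main-levitt}, with the regular cylinders $C'(\Gamma_n,\hat e)$ replacing the ordinary cylinders and Proposition~\ref{prop:split-matrix} playing the role of Proposition~\ref{prop:rips-matrix}. By Proposition~\ref{sec:current-derived}, every current $\mu\in\Curr(T)$ is carried by the regular lamination, so no information is lost by restricting attention to regular cylinders. The assignment $\mu\mapsto(\mu_n)_{n\in\N}$, where $\mu_n=(\mu(C'(\Gamma_n,\hat e)))_{\hat e\in\GE(\Gamma_n)}$, then embeds $\Curr(T)$ in the positive cone $C$ associated to the sequence of incidence matrices $(M_n)$ in the sense of Section~\ref{sec:matrices}, provided one can establish that the map is injective.

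To bound the projective dimension of $C$ via Proposition~\ref{prop:matrices} I would argue as in the Levitt case. Each unfolding step, whether Rips or splitting, produces an incidence matrix which is a product of elementary matrices $A_d^i$ (edge subdivision or folding) and $B_d^j$ (absorption of a degree-$2$ vertex into a generalized edge). Since $\Gamma_n$ is homotopy equivalent to the rose with $N$ petals and has no valence-one vertices, the number of generalized edges is bounded by $3N-3$, and the same pigeonhole argument as in Theorem~\ref{thm:main-levitt} shows $D:=\liminf_n|\GE(\Gamma_n)|<3N-3$. The hypothesis that every nested sequence of connected components of $F_n$ collapses to a point prevents the induction from stabilising on any finite graph, so the subdivision produces longer and longer finite paths in $\Gamma_0$, forcing $\|M_0\cdots M_n\|\to+\infty$. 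Proposition~\ref{prop:matrices}(2) then yields projective dimension at most $D-2\leq 3N-6$.

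Injectivity is established by the same expansion formula as in the Levitt proof,
\[
\mu(w)=\lim_{n\to+\infty}\sum_{\hat e\in\GE(\Gamma_n)}\langle\hat e|w\rangle\,\mu(\hat e),
\]
valid for every finite reduced word $w$ in $A^{\pm 1}$, using Proposition~\ref{prop:split-partition} to decompose the regular cylinder $C'(\Gamma_0,w)$ at each stage and bounding the leftover occurrences of $w$ that cross a branch vertex by a combinatorial constant times $\max_v \mu(C'(\Gamma_n,v))$. The main obstacle is the regular-cylinder exhaustion lemma replacing Proposition~\ref{prop:rips-exhaustion}: for every coherent sequence of vertices $v_n$ of $\Gamma_n$ with $\tau_n(v_{n+1})=v_n$, one must show $\mu(C'(\Gamma_n,v_n))\to 0$. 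Here the hypothesis of the theorem is used directly: the nested intersection $\bigcap_n v_n$ is a singleton $\{P\}\subset K_A$, so $\bigcap_n C'(\Gamma_n,v_n)$ is contained in $\CQ^{-1}(P)\cap L'(S_A)$; by the freeness of the action and the finite-to-one property of $\CQ$ (Corollary~5.4 of \cite{ch-a}) this set is finite, and since $\mu$ has no atoms by Proposition~\ref{prop:periodic-leaves} its $\mu$-measure vanishes, which closes the argument.
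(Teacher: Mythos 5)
Your proposal is correct and follows essentially the same route as the paper, whose proof of this theorem is simply the remark that one repeats the argument of Theorem~\ref{thm:main-levitt} with the regular lamination in place of the full lamination and Proposition~\ref{prop:split-matrix} in place of Proposition~\ref{prop:rips-matrix}. You have in fact supplied the details the paper leaves implicit, in particular the regular-cylinder analogue of the exhaustion lemma (Proposition~\ref{prop:rips-exhaustion}), which you derive correctly from the singleton hypothesis, the finite-to-one property of $\CQ$, and the absence of atoms.
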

\begin{proof}
  Again the proof is the same as the proof of
  Theorem~\ref{thm:main-levitt}, using the regular lamination instead
  of the lamination and Proposition~\ref{prop:split-matrix}.
\end{proof}

\subsection{Decomposable case}\label{sec:induction-on-n}

The hypotesis of Theorem~\ref{thm:main-under-complete} are not always
satisfied: there exist $\R$-trees with no induction sequence that
completly analyses the lamination. Those trees are decomposable in the
sense of V.~Guirardel~\cite{guir-indecomposable}. In this case
however, the induction procedure ends up with a subtree with an
action of a subgroup of $\FN$ with rank strictly smaller than $N$. And
this allows us to conclude the proof of Theorem~\ref{thm:main} by
induction on $N$.

Let $T$ be an $\R$-tree with a minimal free action of $\FN$ with dense
orbits. Let $A$ be a basis of $\FN$ and let $S_A=(K_A,A)$ be the
associated system of isometries. Let $S_n=(F_n,A_n)$ be an infinite
sequence of systems of isometries obtained from $S_0=S_A$ by unfolding
induction. By Proposition~\ref{prop:split-exist}, this is
always possible. Let $\tau_n:\Gamma_{n+1}\to\Gamma_n$ be the map
between the graphs. As $S_n$ is reduced, $\Gamma_n$ is connected, does
not have vertices of valence $1$ and $\tau_n$ is a homotopy
equivalence. The number of edges of $\Gamma_n$ is strictly increasing
with $n$. The inverse limit of $(\Gamma_n,\tau_n)$ is an infinite
graph $\hat\Gamma$. Vertices of $\hat\Gamma$ are sequences
$(v_n)_{n\in\N}$ such that $v_n$ is a vertex of $\Gamma_n$ and
$\tau_n(v_{n+1})=v_n$, thus $(v_n)_{n\in\N}$ is a sequence of nested
compact subtrees of $K_A$. Edges of $\hat\Gamma$ are sequences
$(a_n)_{n\in\N}$ such that $a_n$ is an edge of $\Gamma_n$ and
$\tau_n(a_{n+1})=a_n$, thus $a_{n+1}$ is a restriction of the partial
isometry $a_n$.

By our previous work \cite{ch-a}, the index of $\hat\Gamma$ is finite
and thus it contains a finite \textbf{core graph} $\Gamma_\infty$:
the union of all reduced loops in $\hat\Gamma$. We remark that
$\Gamma_\infty$ can be empty and that it can fail to be connected. 

\underline{First case}: $\Gamma_\infty$ is empty. Equivalently, each
connected component of $\hat\Gamma$ is a tree. Moreover, as the index
of $\hat\Gamma$ is finite, each connected component of $\hat\Gamma$
has a finite number of ends. Let $(v_n)$ be a vertex of $\hat\Gamma$,
this is a sequence of vertices of $\Gamma_n$ which are nested
connected components of the forest $F_n$. As before, we denote by
$v_\infty$ the nested intersection of $(v_n)$. There are finitely many
infinite reduced paths in $\hat\Gamma$ starting from $v_\infty$. For
each $x\in v_\infty$ there exists an infinite path in $\hat\Gamma$
starting at $(v_n)$ which reads an admissible reduced regular word $X$
such that $\CQ(X)=x$. We get that $v_\infty$ is finite and thus a
singleton. Theorem~\ref{thm:main-under-complete} applies in this case.

\underline{Second case}: $\Gamma_\infty$ is non-empty. As
$\Gamma_\infty$ is finite, there exists $s\in\N$ such that for all
$n\geq s$, $\Gamma_\infty$ is a subgraph of $\Gamma_n$ and the map
$\tau_n$ restricts to the identity on $\Gamma_\infty$. We denote by
$\Gamma_\infty^1,\ldots,\Gamma_\infty^r$ the connected components of
$\Gamma_\infty$ in this case $r\geq 1$. For each $i=1,\ldots,r$, let
$F_\infty^i$ be the compact forest whose connected components are the
nested intersections of the vertices of $\Gamma_\infty^i$, and let
$A_\infty^i$ be the partial isometries of $F_\infty^i$ which are the
nested intersections of edges of $\Gamma_\infty^i$.

\begin{lem}\label{lem:}
  The system of isometries $S_\infty^i=(F_\infty^i,A_\infty^i)$ has
  graph $\Gamma_\infty^i$ and, is reduced.

  Let $N_i$ be the rank of the free group
  $\pi_1(\Gamma_\infty^i)$. Let $T_i$ be the $\R$-tree associated to
  $S_\infty^i$. The action of $F_{N_i}$ on $T_i$ is free, minimal by
  isometries and with dense orbits. \qed
\end{lem}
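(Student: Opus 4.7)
The plan is to verify the three assertions of the lemma---graph identification, reducedness of $S_\infty^i$, and the properties of the $F_{N_i}$-action on $T_i$---in that order, the middle one being where the work lies.

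\emph{Graph identification.} Using that $\tau_n$ restricts to the identity on $\Gamma_\infty$ for all $n \geq s$, each vertex of $\Gamma_\infty^i$ indexes a decreasing nested family of connected components $v_n \subseteq F_n$, which are compact subtrees of $\bar T$; by compactness their intersection $v_\infty = \bigcap_{n \geq s} v_n$ is a non-empty compact subtree, and by construction it is a connected component of $F_\infty^i$. Likewise each edge of $\Gamma_\infty^i$ yields nested restrictions of a single partial isometry, and the intersection of their domains and images defines a partial isometry in $A_\infty^i$. This is exactly the graph $\Gamma_\infty^i$ for $S_\infty^i$.

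\emph{Reducedness.} Connectedness of $\Gamma_\infty^i$ is by choice, and since $\Gamma_\infty^i$ sits inside the core $\Gamma_\infty$, every vertex lies on a reduced loop, hence has valence at least $2$; this also gives the extremal-points condition in the limit from the reducedness of each $S_n$, using that the combinatorics on $\Gamma_\infty$ are now stable under $\tau_n$. Independent generators passes to the limit cleanly: for any infinite admissible path $X$ in $\Gamma_\infty^i$, $X$ is admissible in $\Gamma_n$ for every $n \geq s$, and the nested intersection of the singleton $S_n$-domains gives a single point of $F_\infty^i$. The main obstacle is the remaining condition---that every $P \in F_\infty^i$ must be the $\CQ$-image of an infinite admissible path in $\Gamma_\infty^i$. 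I would argue this exactly as in the first case treated just above the lemma: for each $n \geq s$ choose an infinite admissible path in $\Gamma_n$ witnessing $P$, and use finiteness of the index of $\hat\Gamma$ to extract (possibly after adjustment) such a path remaining inside the core $\Gamma_\infty^i$.

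\emph{Action on $T_i$.} Take $T_i$ to be the $\R$-tree dual to the reduced system $S_\infty^i$, built by the standard mapping-torus/suspension construction, on which $F_{N_i} = \pi_1(\Gamma_\infty^i)$ acts by isometries; this action is automatically minimal, and density of orbits follows from $S_\infty^i$ having independent generators together with $F_\infty^i$ being the compact heart of $T_i$. Freeness of the $F_{N_i}$-action on $T_i$ is inherited from freeness of the $\FN$-action on $T$ through the inclusion $\pi_1(\Gamma_\infty^i) \hookrightarrow \pi_1(\Gamma_n) \cong \FN$ induced by the markings, which realises $T_i$ (up to an equivariant identification) as an $F_{N_i}$-invariant subtree arising from $F_\infty^i \subseteq \bar T$. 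The key input for the whole proof is the finite index of $\hat\Gamma$ already used in the first case.
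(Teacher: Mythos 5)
First, a point of comparison: the paper offers no proof of this lemma at all --- it is stated with a \qed and implicitly deferred to the constructions of \cite{ch-a} and \cite{chr} --- so your sketch cannot be checked against an argument in the text and has to stand on its own. Your architecture (identify the graph, check the conditions of reducedness one by one, then transfer freeness, minimality and density from $T$) is the natural one, and the graph identification and the independent-generators step are sound: domains of the limit isometries are nested intersections of non-empty compact subtrees, and the domain of an infinite admissible path in $S_\infty^i$ sits inside the corresponding singleton domain in each $S_n$.

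There are, however, genuine gaps exactly where you wave at ``passing to the limit''. For the extremal-point condition of reducedness you claim it follows ``in the limit from the reducedness of each $S_n$''; it does not, for two reasons. An extremal point $P$ of $\dom(a_\infty)=\bigcap_n\dom(a_n)$ need not be extremal in any $\dom(a_n)$ (nested intersections create new extremal points), and even when it is, reducedness of $S_n$ only provides a second partial isometry $b_n\in A_n^{\pm 1}$ defined at $P$, and $b_n$ may be an edge of $\Gamma_n\ssm\Gamma_\infty$, hence contribute nothing to $A_\infty^i$; closing this requires the structure of $\hat\Gamma\ssm\Gamma_\infty$ (its vertices correspond to singletons of $K_A$, as used in the proof of Lemma~\ref{lem:injective}), which you do not invoke. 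Likewise, for the condition that every $P\in F_\infty^i$ is $\CQ(X)$ for some infinite admissible path $X$ \emph{in} $\Gamma_\infty^i$, the phrase ``extract (possibly after adjustment) such a path remaining inside the core'' is precisely the content to be proved: the witnessing paths in $\hat\Gamma$ a priori leave the core, and ruling this out again needs the singleton property of non-core vertices together with freeness of the action. Finally, the last paragraph (minimality, density of orbits, and the identification of $F_\infty^i$ with the compact heart of $T_i$) is asserted rather than argued; density of orbits for the tree dual to a system of isometries is not automatic and is exactly what the machinery of \cite{chl4} and \cite{ch-a} is for. In short, the plan is right, but the three limit-passage steps are where the lemma actually has content, and none of them is carried out.
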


For any finite admissible non-singular subpath $u$ in
$\Gamma_\infty^i$, we consider the regular cylinder
$C'(\Gamma_\infty^i,u)$ of bi-infinite admissible regular paths that
passes through $u$ at index $0$. The translates of such cylinders form
a basis of open sets of the regular dual lamination $L'(T_i)$. We
insist that we consider $L'(T_i)$ as a lamination with respect to
$F_{N_i}=\pi_1(\Gamma_\infty^i)$ (and not with respect to the original
$\FN$). Any current $\mu\in\Curr(T)$ carried by the dual lamination
$L(T)$ induces a current $\mu^i\in\Curr(T_i)$ by letting
\[
\mu^i(C(\Gamma_\infty^i,u))=\mu(C(\Gamma_\infty^i,u)).
\]
For each $n>s$, as the homotopy equivalence $\tau_n$ fixes the
subgraph $\Gamma_\infty$ the incidence matrix $M_n$ is reducible and
we denote by $M^{\GE}_n$ its submatrix corresponding to generalized
edges in $\GE(\Gamma_n\ssm\Gamma_\infty)$.  Let $C$ be the positive
cone of non-negative vectors $((\mu_n(\hat e))_{\hat
  e\in\GE(\Gamma_n\ssm\Gamma_\infty)})_{n\geq s}$. 

\begin{lem}\label{lem:injective}
The map
\[
\begin{array}{rcl}
\Curr(T)&\to&\Curr(T_1)\times\cdots\times\Curr(T_r)\times C\\
\mu&\mapsto&(\mu^1,\ldots,\mu^r,((\mu_n(\hat e))_{\hat e\in\GE(\Gamma_n\ssm\Gamma_\infty)})_{n\geq s})
\end{array}
\]
is injective.
\end{lem}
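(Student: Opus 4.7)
The current $\mu$ is uniquely determined by its measures on cylinders $C_A(w)$ as $w$ ranges over finite reduced admissible words in $A^{\pm 1}$, since translates of these cylinders form a sub-basis for the topology of $\partial^2\FN$. The plan is to recover each $\mu(w)$ from the image tuple by adapting the limit argument from the proof of Theorem~\ref{thm:main-levitt}.

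For each $n \geq s$, I iterate Propositions~\ref{prop:Rips-partition} and~\ref{prop:split-partition} to obtain the partition
\[
\mu(w) = \sum_{w'} \mu(C'(\Gamma_n, w')),
\]
the sum being over finite reduced regular paths $w'$ in $\Gamma_n$ with $\tau_0\circ\cdots\circ\tau_{n-1}(w') = w$; this is legitimate since $\mu$ has no atoms (Proposition~\ref{prop:periodic-leaves}) and is carried by the regular lamination $L'(T)$ (Proposition~\ref{sec:current-derived}). I then classify each $w'$ geometrically in $\Gamma_n$, separating: \textbf{(a)} paths $w'$ contained in a single generalized edge $\hat e \in \GE(\Gamma_n \ssm \Gamma_\infty)$, contributing $\mu_n(\hat e)$, a coordinate of the image in $C$; \textbf{(b)} paths $w'$ contained in some component $\Gamma_\infty^i$ of $\Gamma_\infty$, whose cylinder $C'(\Gamma_n, w')$ I further split into the sub-cylinder of bi-infinite paths remaining in $\Gamma_\infty^i$ (contributing $\mu^i(C'(\Gamma_\infty^i, w'))$ by the defining identity for $\mu^i$) and the sub-cylinder of paths exiting $\Gamma_\infty^i$; \textbf{(c)} paths $w'$ crossing a branch vertex of $\Gamma_n$ lying outside $\Gamma_\infty$, together with the exiting sub-cylinders of type (b).

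Contributions of type (a) and the inside portion of type (b) depend only on the image tuple. The remaining contributions (type (c) plus the exiting portions) are bounded, as in Theorem~\ref{thm:main-levitt}, by a multiple of $\max_v \mu(C'(\Gamma_n, v))$, where $v$ ranges over branch vertices of $\Gamma_n$ outside $\Gamma_\infty$ together with boundary vertices of $\Gamma_\infty$ having directions into the complement; the multiplicative constant depends only on $|w|$ and $N$, since the valence of vertices in $\Gamma_n$ is bounded in terms of the at most $3N-3$ generalized edges, independently of $n$.

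The main obstacle is showing this error vanishes as $n \to \infty$. For a branch vertex strictly outside $\Gamma_\infty$, the corresponding coherent sequence in $\hat\Gamma$ eventually enters a connected component of $\hat\Gamma \ssm \Gamma_\infty$ which is a tree with finitely many ends (by finiteness of the index), so the nested intersection in $K_A$ is a singleton; by Proposition~\ref{prop:periodic-leaves} combined with the argument of Proposition~\ref{prop:rips-exhaustion}, the cylinder measure tends to $0$. For an exit from a boundary vertex $v \in \Gamma_\infty$, the cylinder of paths continuing from $v$ into an outside direction refines exactly as in the first-case analysis above the lemma and shrinks to measure $0$ by the same singleton argument. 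Passing to the limit, $\mu(w)$ is expressed as a convergent sum determined entirely by the image tuple, giving the required injectivity.
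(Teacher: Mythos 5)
Your proof is correct and follows essentially the same route as the paper's: reduce injectivity to recovering each $\mu(w)$, partition the cylinder of $w$ over its occurrences in $\Gamma_n$ into those inside generalized edges of $\Gamma_n\ssm\Gamma_\infty$, those inside $\Gamma_\infty$, and a remainder crossing branch points, then kill the remainder using the singleton property of vertices outside the core together with the absence of atoms (Proposition~\ref{prop:periodic-leaves}). The only cosmetic difference is that you split the type~(b) cylinders into staying/exiting parts rather than folding the exiting parts directly into the error term $\Delta_n$ as the paper does, but both bookkeepings yield the same limit formula.
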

\begin{proof}
  Indeed for any finite reduced word $w$ in $A^{\pm 1}$, we claim that
\[
\mu(w)=\mu(C(\Gamma_0,w))=\sum_{i=1}^{i=r}\sum_{w'\text{ occurence of
  }w\text{ in }\Gamma_\infty^i} \mu^i(w')+\lim_{n\to\infty}\sum_{\hat
  e\in\GE(\Gamma_n\ssm\Gamma_\infty)}\langle w,\hat
e\rangle\mu(\hat e).
\]
As in the proof of Theorem~\ref{thm:main-levitt},
Propositions~\ref{prop:Rips-tau} and \ref{prop:split-tau} prove that
for each $n$:
\[
\mu(w)\geq\sum_{i=1}^{i=r}\sum_{w'\text{ occurence of
  }w\text{ in }\Gamma_\infty^i} \mu^i(w')+\sum_{\hat
  e\in\GE(\Gamma_n\ssm\Gamma_\infty)}\langle w,\hat
e\rangle\mu(\hat e).
\]
and that the difference is given by
\[
\Delta_n=\sum_{w'}\mu(C'(\Gamma_n,w'))
\]
where the sum is taken over all occurences of $w$ as a label of a path
$w'$ in $\Gamma_n$ that contains both vertices of $\Gamma_\infty$ and
edges of $\Gamma_n\ssm\Gamma_\infty$. By definition of
$\Gamma_\infty$, all vertices of $\Gamma_\infty$ have valence at least
$2$ and thus those occurences of $w$ passes over one of the branch
points of $\Gamma_n$. Therefore the number of such occurences is
uniformly bounded (by a constant which depends only on $N$). Hence,
there are finitely many occurences of $w$ in the inverse limit
$\hat\Gamma$ which contains both vertices of $\Gamma_\infty$ and edges
out of $\Gamma_\infty$.  The vertices out of the core graph of
$\hat\Gamma$ corresponds to singletons in $K_A$ and as the action of
$\FN$ on $T$ is free there are only finitely many bi-infinite
admissible regular paths $Z$ in $\hat\Gamma$ that passes through these
occurences of $w$ at index $0$.

We thus get that $\lim_{n\to\infty}\Delta_n=0$
\end{proof}

By Propostion~\ref{prop:split-matrix} and by definition of the cone
$C$, for all $(v_n)_{n\geq s}\in C$ and for $n\geq s$, we have
$v_n=M_n^\GE v_{n+1}$. Thus, the cone $C$ has finite dimension at most
the number of generalized edges in $\Gamma_n\ssm\Gamma_\infty$ minus
one. By induction on the rank of the free group, $\Curr(T_i)$ has
dimension at most $3N_i-5$. Let $\Gamma_n^\GE$ be the graph obtained
from $\Gamma_n$ by replacing generalized edges by edges (so that
$\Gamma_n^\GE$ has no vertex of valence $2$). Contracting the image of
each component $\Gamma_\infty^i$ in $\Gamma_n^\GE$ to a vertex, we get
a connected graph $G_n$. By construction $G_n$ has no vertex of
valence $1$, at most $r$ vertices of valence $2$ and
$|\GE(\Gamma_n\ssm\Gamma_\infty)|$ edges. Moreover $G_n$ has Euler
characteristic $1-N+N_1+\cdots+N_r$. Using that the number of edges of
a connected graph without vertex of valence $1$ is bounded above by
three times the Euler characteristic plus the number of vertices of
valence $2$, we get
\[
|\GE(\Gamma_n\ssm\Gamma_\infty)|\leq 3(N-N_1-\cdots -N_r)-3+r
\]
and thus
\[
(3N_1-4)+\cdots+(3N_r-4)+|\GE(\Gamma_n\ssm\Gamma_\infty)|-1\leq 3N-4-3r
\]
which concludes the proof of Theorem~\ref{thm:main} since $r\geq 1$.

\section{Unique ergodicity}\label{sec:ue}

Recall that an $\R$-tree in $\partial\CVN$ is uniquely ergodic if it
is dual to a unique projective current.  There are known examples of
uniquely ergodic trees: As the transition matrix of a train-track of
an iwip outer automorphisms is a primitive integer matrix (up to
passing to a power), Perron-Frobenius Theorem implies that the
attracting tree in $\partial\CVN$ of a non-geometric iwip outer
automorphims is uniquely ergodic -- see for instance \cite{chl1-III}.

In this Section we give a more general concrete criterion of unique
ergodicity.

Let $T$ be an $\R$-tree in $\partial\CVN$ with dense orbits. Let $A$
be a basis of $\FN$, and $(S_n)$ a sequence of systems of isometries
obtained from $S_0=S_A=(K_A,A)$ by unfolding induction. As in
Section~\ref{sec:rips-lamination} and \ref{sec:current-derived}, let
$(\Gamma_n)$ be the sequence of associated graphs and
$\tau_n:\Gamma_{n+1}\to\Gamma_n$ the homotopy equivalences. We denote
by $D$ the inferior limit of the number $d_n$ of generalized edges of
$\Gamma_n$:
\[
D=\liminf d_n.
\]

We say that \textbf{all edges are fully splitted} between steps $n$
and $n+L$ if the image of each generalized edge $\hat e$ of
$\Gamma_{n+L}$ in $\Gamma_n$ is an edge-path that covers
$\Gamma_n$. This is equivalent to saying that the incidence matrix
(for generalized edges) of
$\tau_{n}\circ\cdots\circ\tau_{n+L-1}:\Gamma_{n+L}\to\Gamma_n$ has
strictly positive entries.

\begin{thm}
  Assume that there exists $L$ and infinitely many $n$ such that the number
  of generalized edges of $\Gamma_n$ is $D$ and between steps $n$ and $n+L$ all
  edges are fully splitted.

  Then, $T$ is uniquely ergodic.
\end{thm}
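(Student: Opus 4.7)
The plan is to reduce the theorem to the matrix-theoretic criterion of Proposition~\ref{prop:matrix-ue} via the sequence of incidence matrices produced by the unfolding induction, and then argue that a current is projectively determined by its associated sequence of non-negative vectors.

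First, I would set up the cone framework. By Proposition~\ref{prop:split-matrix}, each current $\mu\in\Curr(T)$ gives a sequence of non-negative vectors $\mu_n=(\mu(C'(\Gamma_n,\hat e)))_{\hat e\in\GE(\Gamma_n)}$ satisfying $\mu_n=M_n\mu_{n+1}$, where $M_n$ is the incidence matrix of $\tau_n$. The image $\{(\mu_n)_{n\in\N}\mid\mu\in\Curr(T)\}$ sits inside the positive cone $C$ of the sequence $(M_n)$ in the sense of Section~\ref{sec:matrices}. The assumption that all edges are fully splitted between steps $n$ and $n+L$ says exactly that the composition $\tau_n\circ\cdots\circ\tau_{n+L-1}$ sends every generalized edge of $\Gamma_{n+L}$ onto a path covering every edge of $\Gamma_n$, which is equivalent to $M_{[n,n+L-1]}=M_n\cdots M_{n+L-1}$ having strictly positive entries. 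Combined with $d_n=d_{n+L}=D$ for infinitely many $n$, Proposition~\ref{prop:matrix-ue} yields that the projective cone $C$ consists of exactly one point.

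Second, I need to show that the map $\mu\mapsto(\mu_n)$ is injective up to scale on $\Curr(T)$. Following the approximation argument in the proof of Theorem~\ref{thm:main-levitt}, for any finite reduced word $w$ in $A^{\pm 1}$ one has
\[
\mu(w)=\sum_{\hat e\in\GE(\Gamma_n)}\langle\hat e|w\rangle\mu(\hat e)+\Delta_n,
\]
where $\Delta_n$ is bounded by a constant depending only on $|w|$ and $N$ times $\max_v\mu(C'(\Gamma_n,v))$ taken over branch vertices $v$ of $\Gamma_n$. The analogue of Proposition~\ref{prop:rips-exhaustion} for the unfolding induction (via Proposition~\ref{prop:periodic-leaves}, since $\mu$ has no atoms) forces these vertex cylinder measures to zero, so $\mu(w)$ is reconstructed as a limit of linear functionals of $\mu_n$. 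Thus $\mu$ is determined projectively by $(\mu_n)$, which concludes unique ergodicity.

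The main obstacle is ensuring that the hypothesis rules out the decomposable case of Section~\ref{sec:induction-on-n}. If the inverse limit $\hat\Gamma$ had a non-empty proper core graph $\Gamma_\infty$, then for $n$ large the incidence matrices $M_n$ would be block-reducible with identity blocks on the coordinates from $\GE(\Gamma_\infty)$, so no product $M_{[n,n+L-1]}$ could have strictly positive entries; conversely, if $\Gamma_\infty=\Gamma_n$ for large $n$ the maps $\tau_n$ would be identities, again contradicting the fully splitted hypothesis. Hence the hypothesis forces the induction into the regime of Theorem~\ref{thm:main-under-complete}, where the reconstruction formula above is valid, and the proof closes.
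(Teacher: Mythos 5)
Your proof is correct and follows essentially the same route as the paper, whose entire argument is the observation that the fully-splitted hypothesis makes the products $M_n\cdots M_{n+L-1}$ positive unimodular matrices so that Proposition~\ref{prop:matrix-ue} applies. Your two additional steps --- the projective injectivity of $\mu\mapsto(\mu_n)_n$ via the reconstruction formula, and the observation that a non-empty core graph $\Gamma_\infty$ would force the incidence matrices to be reducible and hence contradict positivity --- are points the paper leaves implicit (they are carried out in the proofs of Theorems~\ref{thm:main-levitt} and \ref{thm:main-under-complete} and in Section~\ref{sec:induction-on-n}), and making them explicit is a genuine improvement rather than a deviation.
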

\begin{proof}
  Indeed with the hypothesis above the matrices $M_{n}\cdots
  M_{n+L-1}$ are square matrices with strictly positive and bounded
  coefficients. Thus we can use Lemma~\ref{lem:matrix-ue}.
\end{proof}



\end{document}